\author{Pietro Ploner}
\title{Computation of framed deformation functors}
\date{}
\newtheorem{defin}{Definition}[section]
\newtheorem{teo}[defin]{Theorem}
\newtheorem{lemma}[defin]{Lemma}
\newtheorem{prop}[defin]{Proposition}
\begin{document}

\maketitle

\begin{abstract}
In this work we compute the framed deformation functor associated to a reducible representation given as direct sum of 2-dimensional representations associated to elliptic curves with appropriate local conditions. Such conditions arise in the works of Schoof and correspond to reduction properties of modular elliptic curves.
\end{abstract}

\vskip 2cm

\section{Introduction}
\vskip 1.5cm

This work originates from the articles of Schoof about classification of abelian varieties \cite{S2}. There he examines the case of abelian varieties over $\mathbb{Q}$ with semistable reduction in only one prime $\ell$ and good reduction everywhere else and proves that they do not exist for $\ell=2,3,5,7,13$, while for $\ell=11$ they are given by products of the Jacobian variety $J_0(11)$ of the modular curve $X_0(11)$. In \cite{S3} he makes some generalisations of this result when $\ell$ is not a prime and the base field is not $\mathbb{Q}$, but a quadratic field. Some similar results, given in terms of $p$-divisible groups, were also previously obtained by Abrashkin in \cite{A}.

The main purpose of this work is to translate some results of those articles in terms of deformation theory of representations associated to elliptic curves. We examine the following setting: let $p\ne{\ell}$ be distinct primes and $S=\{p,\ell,\infty\}$. Let $\bar{\rho}_i:G_S\rightarrow{GL_2(k)}\ \ i=1,\dots,n$ be Galois representations, where $k$ is a finite field of characteristic $p$ and $G_S$ is the Galois group of the maximal extension of $\mathbb{Q}$ unramified outside $S$. We can suppose that there are exactly $r$ non-isomorphic representations among them and, up to reordering indexis, suppose that they are $\bar{\rho}_1,\dots,\bar{\rho}_r$. Then we can write
\begin{eqnarray}
\bar{\rho}=\bar{\rho}_1\oplus\dots\oplus{\bar{\rho}_n}=\oplus_{i=1}^r\bar{\rho}_i^{e_i}:G_S\rightarrow{GL_{2n}(k)}
\end{eqnarray}
where $\sum_{i=1}^r{e_i}=n$. The main result is the following

\begin{teo}: Suppose that:
\begin{enumerate}
	\item the $k$-vector space $Ext_{\underline{D},p}^1(V_{\bar{\rho}_i},V_{\bar{\rho}_j})$ of killed-by-$p$ extensions is trivial for every $i,j=1,\dots,r$;
	\item $Hom_G(V_{\bar{\rho}_i},V_{\bar{\rho}_j})=\begin{cases} k & \text{if}\  i=j \\ 0 & \text{if}\  i\ne{j} \end{cases}.$  
\end{enumerate}

Let $F_{\underline{D}}^{\Box}$ be the framed deformation functor associated to $\bar{\rho}$ with the local conditions:
\begin{itemize}
	\item $\rho$ is $p$-flat over $\mathbb{Z}[1/\ell]$;
	\item $\rho$ satisfies $(\rho_i(g)-Id)^2=0$ for every $g\in{I_{\ell}}$;
	\item $\rho$ is odd.
\end{itemize}

Then $F_{\underline{D}}^{\Box}$ is represented by a framed universal ring $R_{\underline{D}}^{\Box}$ which is isomorphic to $W(k)[[x_1,\dots,x_N]]$, where $N=4n^2-\sum_{i=1}^r{e_i}^2$.
\end{teo}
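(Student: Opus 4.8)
The plan is to treat the representability and the smoothness of $R_{\underline{D}}^{\Box}$ as two separate problems. Representability is the easy half: framed deformation functors are pro-representable for an arbitrary residual representation over the finite field $k$ --- this is exactly the reason for framing, since no Schur-type irreducibility of $\bar{\rho}$ is needed --- and each of the three imposed local conditions ($p$-flatness at $p$, the unipotence relation $(\rho(g)-Id)^2=0$ on $I_{\ell}$, and oddness at $\infty$) is relatively representable. Hence $F_{\underline{D}}^{\Box}$ is represented by a complete local Noetherian $W(k)$-algebra $R_{\underline{D}}^{\Box}$ with residue field $k$, and the content of the theorem is that this ring is formally smooth over $W(k)$ of relative dimension exactly $N$. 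I would establish this by first computing the mod-$p$ tangent space and then proving that the functor is unobstructed.

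For the tangent space, write $ad\,\bar{\rho}=End_k(V_{\bar{\rho}})$ for the adjoint module, with $G_S$ acting by conjugation. The framed tangent space is the group $Z_{\underline{D}}^1(G_S,ad\,\bar{\rho})$ of $1$-cocycles whose restriction to each decomposition group represents a class in the local tangent space $L_v$, and (since coboundaries restrict to local coboundaries and hence satisfy every local condition) it sits in the exact sequence
\[
0\to B^1(G_S,ad\,\bar{\rho})\to Z_{\underline{D}}^1(G_S,ad\,\bar{\rho})\to H_{\underline{D}}^1(G_S,ad\,\bar{\rho})\to 0,
\]
with $\dim_k B^1=\dim_k ad\,\bar{\rho}-\dim_k H^0(G_S,ad\,\bar{\rho})$. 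From the decomposition $V_{\bar{\rho}}=\bigoplus_{i=1}^r V_{\bar{\rho}_i}^{e_i}$ I obtain the $G_S$-equivariant splitting
\[
ad\,\bar{\rho}\cong\bigoplus_{i,j=1}^r Hom_k(V_{\bar{\rho}_i},V_{\bar{\rho}_j})^{\oplus e_i e_j},
\]
so that $H^0(G_S,ad\,\bar{\rho})=End_G(V_{\bar{\rho}})$ has dimension $\sum_{i}e_i^2$ by hypothesis (2), giving $\dim_k B^1=4n^2-\sum_i e_i^2=N$. Provided the three local conditions respect this splitting --- which I would check case by case --- the Selmer group decomposes as $H_{\underline{D}}^1(G_S,ad\,\bar{\rho})\cong\bigoplus_{i,j}Ext_{\underline{D},p}^1(V_{\bar{\rho}_i},V_{\bar{\rho}_j})^{\oplus e_i e_j}$, which vanishes by hypothesis (1). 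Therefore $\dim_k Z_{\underline{D}}^1=N$, as required.

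The main obstacle is the smoothness. By the tangent--obstruction presentation one has $R_{\underline{D}}^{\Box}\cong W(k)[[x_1,\dots,x_N]]/\mathfrak{a}$ where $\mathfrak{a}$ is generated by at most $\dim_k H_{\underline{D}}^2(G_S,ad\,\bar{\rho})$ elements, so it suffices to show the deformation problem is unobstructed. I would argue in two steps. First, each local condition is formally smooth, so that local obstructions vanish and local lifts always exist: this is Ramakrishna's theorem for the flat condition at $p$, together with the corresponding statements for the unipotent condition at $\ell$ and for oddness at $\infty$ established earlier. Second, the remaining global obstruction, which measures whether compatible local lifts glue and lies in the Selmer-type group $H_{\underline{D}}^2(G_S,ad\,\bar{\rho})$, must be shown to vanish; using the same splitting into $Hom_k(V_{\bar{\rho}_i},V_{\bar{\rho}_j})$-summands, Poitou--Tate duality identifies this obstruction with a dual Selmer group, and the Greenberg--Wiles formula --- fed with the local tangent dimensions and the already-proved vanishing $H_{\underline{D}}^1=0$ --- forces that dual group to be trivial. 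I expect this last step, and specifically the verification that the chosen local conditions are balanced so that the product of local terms in the Greenberg--Wiles formula equals $1$, to be the technical heart of the proof. Once unobstructedness is established, the tangent-space count yields $R_{\underline{D}}^{\Box}\cong W(k)[[x_1,\dots,x_N]]$ with $N=4n^2-\sum_{i=1}^r e_i^2$.
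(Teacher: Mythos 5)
Your representability remarks and your tangent-space computation are fine, and the latter essentially coincides with the paper's own count (the paper uses the exact sequence $0\to F_{\bar{\rho},S}(k[\epsilon])\to F_{\bar{\rho},S}^{\Box}(k[\epsilon])\to Ad(\bar{\rho})/Ad(\bar{\rho})^G\to 0$ together with hypotheses (1) and (2); your cocycle count $\dim_k Z^1_{\underline{D}}=\dim_k B^1=4n^2-\sum_i e_i^2$ is the same computation). The gap is in the smoothness half, and it is twofold. First, you assert that each local condition is formally smooth over $W(k)$, citing ``the corresponding statements established earlier''; no such statements exist, and the assertion is false in the cases at hand. For the Steinberg-type condition at $\ell$, Theorem 4.5 only shows that $R_{\bar{\rho}}^{\chi,1,\Box}$ is a $\mathbb{Z}_p$-flat domain whose \emph{generic fiber} $R_{\bar{\rho}}^{\chi,1,\Box}[1/p]$ is formally smooth; nothing is claimed, or true in general, integrally. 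At $\infty$ the computation of Section 5 gives $R^{\Box}\simeq W(k)[[a,b,c]]/(2a+a^2+bc)$, which is not formally smooth when $p=2$ (the relation then lies in the square of the maximal ideal), and $p=2$ is precisely the case of the motivating application $A[2]\simeq E[2]^g$, $E=J_0(11)$. Moreover Ramakrishna's theorem (Theorem 3.6) is proved only for $2$-dimensional representations, while $\bar{\rho}|_{G_p}$ here is $2n$-dimensional. Second, the vanishing of the dual Selmer group, which you yourself flag as the technical heart, is left as a hope: for the highly reducible $\bar{\rho}=\oplus_i\bar{\rho}_i^{e_i}$ the Greenberg--Wiles balance is far from automatic, since $h^0(G_S,Ad(\bar{\rho}))=\sum_i e_i^2$ is large, $h^0(G_S,Ad(\bar{\rho})(1))$ need not vanish, and the local terms at $\ell$ and $\infty$ are not the standard balanced ones; neither you nor the paper supplies this computation.

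The paper's proof is structured precisely to avoid both points, and proceeds by a genuinely different mechanism: it never proves unobstructedness. Kisin's presentation results (Section 6) and the geometric-deformation-functor formalism (Section 7), which require only $\mathbb{Z}_p$-flatness and generic smoothness of the local rings, give $dim_{Krull}R_{\bar{\rho}_i,\underline{D}}\ge 1$ for each $2$-dimensional $\bar{\rho}_i$; hypothesis (1) makes the tangent space of $F_{\bar{\rho}_i,\underline{D}}$ trivial, so $R_{\bar{\rho}_i,\underline{D}}=W(k)$ and each $\bar{\rho}_i$ admits a characteristic-zero lift $\rho_i$. The universal framed deformation of $\bar{\rho}$ is then constructed explicitly as $(1+M)\bigl(\oplus_i\rho_i^{e_i}\bigr)(1+M)^{-1}$ with $M$ a matrix of formal variables, cut down by conjugation by the centralizer (this is where hypothesis (2) enters), yielding a power series ring $\tilde{R}$ in exactly $N$ variables; finally the canonical map $R_{\underline{D}}^{\Box}\to\tilde{R}$ is shown to be surjective on mod-$p$ tangent spaces and hence, by the dimension count, an isomorphism. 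To rescue your route you would need integral formal smoothness of the local conditions for the $2n$-dimensional restrictions (false as stated) and an actual proof of the dual-Selmer vanishing; the paper's detour through explicit lifts and matrix conjugation exists exactly because those steps do not go through.
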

\vskip 1cm

The setting works in particular when $\bar{\rho_i}$ is the representation associated to the $p$-torsion points of an elliptic curve $E_i$ over $\mathbb{Q}$ having semistable reduction in $\ell$ and good supersingular reduction at $p$, as the varieties described in \cite{S2}. Moreover the local condition in $\ell$ corresponds to the condition of semistable action described in \cite[Section 2]{S2}, while the condition in $p$ is the classical flatness condition introduced in \cite{R}. The final result want to express that the framed deformation ring turns out to be the ``simplest" possible, giving an analog for deformation of \cite[Th. 8.3]{S2}.


The work is structured as such: in section 2 we recall the main features of deformation theory, following mainly Mazur's original formulation (see \cite{M1,M2}) and we introduce framed deformation functors, following Kisin's approach\cite{K1}, but avoiding the use of the formalism of groupoids. Then in the following three chapters we describe the local conditions we have used in our theorem: first we deal with the flatness condition in the residual prime $p$, which is treated according the initial results of Ramakrishna \cite{R} and expanded by the work of Conrad \cite{CDT,C1,C2}; then we pass to examine the semistability condition, which is treated as a particular case of representation of Steinberg type: the computation of the universal ring in this part is pretty indirect and passes through the use of formal schemes \cite{K2}. Finally, for the archimedean prime, the computation of the deformation ring is performed directly. Chapters 6 and 7 are dedicated to local-to-global arguments, which consent to build up a presentation of a global deformation ring using the computations on the local ones we did in the previous chapters. The definition of geometric deformation functor is introduced, too. It is due to Kisin \cite{K2} and the name comes form the ``geometric" representations defined in Serre's conjecture. In the final chapter Theorem 1.1 is proved with the use of the technical instruments introduced and through a final direct computation of the universal framed deformation, using matrix algebras.


\vskip 1cm

\section{Basics Notions of Deformation theory}
\vskip 1cm

Let $k$ be a finite field of characteristic $p$ and let $S$ be a finite set of rational primes containing $p$ and the archimedean prime. Denoting by $\mathbb{Q}_S$ the maximal extension of $\mathbb{Q}$ unramified outside $S$ and by $G_S=Gal(\mathbb{Q}_S/\mathbb{Q})$, consider a Galois representation $\bar{\rho}:G_S\rightarrow{GL_N(k)}$ and denote by $V_{\bar{\rho}}$ the associated $G_S$-module. Let $A$ be a complete noetherian local $W(k)$-algebra with residue field $k$ and denote by $\underline{\hat{Ar}}$ the category of such algebras. A {\it lift} of $\bar{\rho}$ to $A$ is a representation $\rho_A:G_S\rightarrow{GL_N(A)}$ such that the diagram

\[ \xymatrix {  & GL_N(A) \ar[d]^{{\pi}_A} \\ G_S \ar[ur]^{\rho_A} \ar[r]^{\bar{\rho}}   & GL_N(k) } \]
commutes, where ${\pi}_A$ is the natural projection on the residue field. Two lifts $\rho_1,\rho_2$ are said to be {\it equivalent} if there exists a matrix $M\in{Ker(\pi_A)}$ such that $M\rho_1(g)M^{-1}=\rho_2(g)$ for every $g\in{G_S}$.

A {\it deformation} of $\bar{\rho}$ to $A$ is an equivalence class of lifts. The starting representation $\bar{\rho}$ is called {\it residual}.

Deformations can be better understood via a categorical approach. Given a Galois representation $\bar{\rho}$, we can define the {\it deformation functor} 
\begin{eqnarray}
F_{\bar{\rho}}:\underline{\hat{Ar}}\rightarrow{\underline{Sets}},
\end{eqnarray} 
which associates to an element $A\in{\hat{Ar}}$ the set of deformation classes of $\bar{\rho}$ to $A$.

Kisin has introduced a variant of the deformation functor. Let $\beta$ be a $k$-basis of the Galois module $V_{\bar{\rho}}$. A {\it framed deformation} of the couple $(V_{\bar{\rho}},\beta)$ to a ring $A\in{\underline{\hat{Ar}}}$ is a couple $(V_A,\beta_A)$, where $V_A$ is a free $N$-dimensional $A$-module with continuous $G_S$-action lifting the action of $V_{\bar{\rho}}$ and $\beta_A$ is an $A$-basis of $V_A$ lifting $\beta$. We can then define the {\it framed deformation functor} $F_{\bar{\rho}}^{\Box}:\underline{\hat{Ar}}\rightarrow{\underline{Sets}}$ which associates to an algebra $A\in{\underline{\hat{Ar}}}$ the set of framed deformation classes of $(V_{\bar{\rho}},\beta)$ to $A$.

\begin{teo}
\begin{enumerate}
 \item The framed deformation functor is representable by a ring $R^{\Box}=R_{\bar{\rho}}^{\Box}\in{\underline{\hat{Ar}}}$. 
 \item If $\bar{\rho}$ satisfies the trivial centralizer condition $End_{k[G]}(V_{\bar{\rho}})\simeq{k}$, then $F_{\bar{\rho}}$ is representable by a ring $R=R_{\bar{\rho}}\in{\underline{\hat{Ar}}}$.
\end{enumerate}
\end{teo}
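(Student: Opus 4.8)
The plan is to establish both statements through Schlessinger's criteria, exploiting the fact that fixing a basis rigidifies the problem. The key observation is that a framed deformation of $(V_{\bar\rho},\beta)$ to $A$ is the same datum as an honest lift $\rho_A\colon G_S\to GL_N(A)$: once the basis $\beta$ is fixed there is no residual conjugation to quotient by. Thus $F_{\bar\rho}^{\Box}(A)$ is simply the set of continuous homomorphisms lifting $\bar\rho$, and I would first record that this ``functor of lifts'' behaves well under fibre products. Throughout I write $\operatorname{ad}\bar\rho=End_k(V_{\bar\rho})$ for the endomorphism module with the conjugation (adjoint) $G_S$-action, so that $H^0(G_S,\operatorname{ad}\bar\rho)=End_{k[G]}(V_{\bar\rho})$.

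For part (1), let $A'\to A$ and $A''\to A$ be morphisms in $\underline{\hat{Ar}}$ with at least one of them surjective. Given compatible lifts $\rho'$ over $A'$ and $\rho''$ over $A''$ whose images agree over $A$, the assignment $g\mapsto(\rho'(g),\rho''(g))$ is a well-defined homomorphism to $GL_N(A'\times_A A'')$, and it is the unique lift restricting to $\rho'$ and $\rho''$. Hence the natural map
\[
F_{\bar\rho}^{\Box}(A'\times_A A'')\longrightarrow F_{\bar\rho}^{\Box}(A')\times_{F_{\bar\rho}^{\Box}(A)}F_{\bar\rho}^{\Box}(A'')
\]
is a bijection, which yields Schlessinger's conditions (H1), (H2) and (H4) at once. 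It remains to check (H3), finite-dimensionality of the tangent space $F_{\bar\rho}^{\Box}(k[\epsilon])$. This tangent space is the space of $1$-cocycles $Z^1(G_S,\operatorname{ad}\bar\rho)$, whose dimension equals $\dim_k H^1(G_S,\operatorname{ad}\bar\rho)+N^2-\dim_k H^0(G_S,\operatorname{ad}\bar\rho)$; this is finite because $G_S$ enjoys the standard finiteness property that $H^i(G_S,M)$ is finite for every finite discrete module $M$. Schlessinger's theorem then gives pro-representability by a ring $R^{\Box}\in\underline{\hat{Ar}}$.

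For part (2) I would pass to the unframed functor $F_{\bar\rho}$, whose value on $A$ is the set of lifts modulo conjugation by $Ker(\pi_A)=1+M_N(\mathfrak m_A)$. Conditions (H1), (H2), (H3) transfer essentially formally from the framed case, since the forgetful map $F_{\bar\rho}^{\Box}\to F_{\bar\rho}$ is surjective and the unframed tangent space is the finite-dimensional $H^1(G_S,\operatorname{ad}\bar\rho)$; so a hull always exists. The only delicate point is the strong gluing condition (H4). Its failure is measured precisely by the presence of nontrivial infinitesimal automorphisms of deformations, and these are governed by $H^0(G_S,\operatorname{ad}\bar\rho)=End_{k[G]}(V_{\bar\rho})$. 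Under the trivial-centralizer hypothesis $End_{k[G]}(V_{\bar\rho})\simeq k$ this consists only of scalars, which act trivially by conjugation; hence every deformation has trivial automorphism group, (H4) holds, and Schlessinger's theorem produces a universal ring $R\in\underline{\hat{Ar}}$.

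I expect the main obstacle to be exactly the verification of (H4) for the unframed functor, namely making rigorous that a scalar-only centralizer forces the conjugation action of $1+M_N(\mathfrak m_A)$ to introduce no extra stabilizers on the relevant small extensions. Concretely, for a small extension $A'\to A$ with principal kernel one must show by successive approximation that the centralizer of a lift $\rho_{A'}$ reduces to the scalars, using at each step that the reduction of the centralizer lands in $H^0(G_S,\operatorname{ad}\bar\rho)=k$; this is the technical heart where the hypothesis is indispensable. As a structural sanity check I would also note the clean consequence that, when the centralizer is trivial, $F_{\bar\rho}^{\Box}\to F_{\bar\rho}$ is formally smooth of relative dimension $N^2-1$, so that $R^{\Box}\cong R[[x_1,\dots,x_{N^2-1}]]$; this relation is precisely what lets one pass between the two rings and feed the later explicit computations back into the framed ring of Theorem 1.1.
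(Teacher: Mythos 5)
The paper itself never proves this theorem: it is quoted as background, with the argument outsourced to Mazur \cite{M1,M2} and Kisin \cite{K1}, so there is no internal proof to compare yours against. Your proposal is the standard Schlessinger-criteria argument, and it is essentially correct. For part (1), the identification of framed deformation classes with honest lifts $\rho_A\colon G_S\to GL_N(A)$ is exactly right; the bijection $F^{\Box}_{\bar\rho}(A'\times_A A'')\to F^{\Box}_{\bar\rho}(A')\times_{F^{\Box}_{\bar\rho}(A)}F^{\Box}_{\bar\rho}(A'')$ follows from $GL_N(A'\times_A A'')\simeq GL_N(A')\times_{GL_N(A)}GL_N(A'')$, and the tangent space $Z^1(G_S,Ad(\bar\rho))$ is finite-dimensional because $H^1(G_S,M)$ is finite for finite $M$ (Hermite--Minkowski); note that your dimension count $\dim_k H^1+N^2-\dim_k H^0$ recovers item 3 of Proposition 2.2, which is a good consistency check. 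For part (2) you correctly locate the entire difficulty in (H4) and in the key lemma that the centralizer in $M_N(A)$ of any lift $\rho_A$ consists of scalars; however, you state this lemma and the inductive strategy (successive approximation along small extensions, using $H^0(G_S,Ad(\bar\rho))=k$ at each graded step) without carrying the induction out, and you also leave implicit the short deduction of (H4) from it: given two pairs of conjugate lifts agreeing over $A$, the discrepancy between the two conjugating matrices centralizes the lift over $A$, hence is a scalar, which lifts along the surjection $A'\to A$ and can be absorbed. These steps are routine but they are the actual content of Mazur's theorem, so a complete write-up should include them. Your closing remark that the trivial-centralizer hypothesis makes $F^{\Box}_{\bar\rho}\to F_{\bar\rho}$ formally smooth with $R^{\Box}\simeq R[[x_1,\dots,x_{N^2-1}]]$ is correct and is precisely the relation the paper exploits later when moving between framed and unframed rings.
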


The rings $R$ and $R^{\Box}$ are called the {\it universal deformation ring} and the {\it universal framed deformation ring} of $\bar{\rho}$ respectively. They are universal in the sense that any deformation of $\bar{\rho}$ to an element $A\in{\underline{\hat{Ar}}}$ can be recovered via a unique homomorphism $R\rightarrow{A}$.

Among the algebras in $\underline{\hat{Ar}}$ there is one with particular properties. Let $k[\epsilon]$ be the ring of polynomials in $\epsilon$ with the condition $\epsilon^2=0$ and let $F$ be a deformation functor. The {\it tangent space} of $F$ is the set $F(k[\epsilon])$. It has a natural structure of $k$-vector space.

\begin{prop}
\begin{enumerate}
	\item $F_{\bar{\rho}}(k[\epsilon])$ is a finite dimensional $k$-vector space; 
	\item $F_{\bar{\rho}}(k[\epsilon])\simeq{H^1(G_S,Ad(\bar{\rho}))}\simeq{Ext_{k[G]}^1(V_{\bar{\rho}},V_{\bar{\rho}})}$ as $k$-vector spaces;
	\item $dim_{k}F_{\bar{\rho}}^{\Box}(k[\epsilon])=dim_{k}F_{\bar{\rho}}(k[\epsilon])+N^2-dim_{k}H^0(G_S,Ad(\bar{\rho})).$
\end{enumerate}
\end{prop}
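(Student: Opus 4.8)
The plan is to establish (2) first, since its cocycle description feeds directly into both (1) and (3). The key step is to unwind what a lift to the dual numbers $k[\epsilon]$ is. Given a lift $\rho:G_S\rightarrow GL_N(k[\epsilon])$ of $\bar{\rho}$, I would write it uniquely as $\rho(g)=(Id+\epsilon c(g))\bar{\rho}(g)$ with $c(g)\in M_N(k)$, and substitute into the multiplicativity relation $\rho(gh)=\rho(g)\rho(h)$. Since $\epsilon^2=0$, matching the coefficients of $\epsilon$ collapses the relation to $c(gh)=c(g)+\bar{\rho}(g)c(h)\bar{\rho}(g)^{-1}$, which is exactly the $1$-cocycle condition $c(gh)=c(g)+g\cdot c(h)$ for $c$ valued in $Ad(\bar{\rho})=M_N(k)$ under the adjoint action $g\cdot X=\bar{\rho}(g)X\bar{\rho}(g)^{-1}$. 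Thus lifts to $k[\epsilon]$ are identified with $Z^1(G_S,Ad(\bar{\rho}))$.

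For the passage from lifts to deformations I would check that equivalence corresponds to cohomology. Conjugating $\rho_1$ by an element $Id+\epsilon X$ of $\ker(\pi_{k[\epsilon]})$ and expanding modulo $\epsilon^2$ gives $c_2(g)=c_1(g)+(X-g\cdot X)$, so equivalent lifts differ by the coboundary of $X$. This yields $F_{\bar{\rho}}(k[\epsilon])\simeq H^1(G_S,Ad(\bar{\rho}))$, and the $k$-linearity is automatic, since the vector space operations on the tangent space induced by $k[\epsilon]$ correspond to scaling and addition of cocycles. The remaining isomorphism in (2) follows from the identification $Ad(\bar{\rho})=Hom_k(V_{\bar{\rho}},V_{\bar{\rho}})$ as $G_S$-modules together with the standard natural isomorphism $H^1(G,Hom_k(V,W))\simeq Ext_{k[G]}^1(V,W)$, which I would simply invoke.

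Finiteness in (1) I would deduce from the fact that $G_S$ satisfies Mazur's finiteness condition $\Phi_p$ (see \cite{M1}): a profinite group with only finitely many open subgroups of each given $p$-power index has finite-dimensional continuous cohomology with coefficients in a finite module. For $G_S$ this rests ultimately on the Hermite--Minkowski finiteness of number fields of bounded degree with restricted ramification; this is the one genuinely external input, and I would cite rather than reprove it. Granting it, $H^1(G_S,Ad(\bar{\rho}))$ is finite-dimensional, which by (2) is precisely statement (1).

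Finally, for (3) I would observe that over $k[\epsilon]$ a framed deformation carries no conjugation quotient, since fixing the basis rigidifies the lift; hence $F_{\bar{\rho}}^{\Box}(k[\epsilon])$ is the full space of cocycles $Z^1(G_S,Ad(\bar{\rho}))$, not its cohomology. The exact sequence $0\rightarrow B^1\rightarrow Z^1\rightarrow H^1\rightarrow 0$ then gives $\dim_k F_{\bar{\rho}}^{\Box}(k[\epsilon])=\dim_k H^1+\dim_k B^1$. Since the coboundary map $Ad(\bar{\rho})\rightarrow B^1$, $X\mapsto (g\mapsto X-g\cdot X)$, is surjective with kernel the invariants $H^0(G_S,Ad(\bar{\rho}))$, one gets $\dim_k B^1=N^2-\dim_k H^0(G_S,Ad(\bar{\rho}))$, and substituting yields the claimed formula. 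The computation is essentially linear algebra over $k$; the only points requiring care are the sign bookkeeping in the cocycle and coboundary conventions and the explicit verification that framing removes the identification by coboundaries, so I expect no serious obstacle beyond citing the $\Phi_p$ finiteness for part (1).
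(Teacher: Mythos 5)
Your proof is correct: the identification of lifts to $k[\epsilon]$ with cocycles valued in $Ad(\bar{\rho})$, equivalence with coboundaries, the $\Phi_p$ finiteness of $G_S$ for part (1), and the observation that framing removes the coboundary quotient so that $F_{\bar{\rho}}^{\Box}(k[\epsilon])\simeq Z^1(G_S,Ad(\bar{\rho}))$, giving the dimension count $\dim_k Z^1 = \dim_k H^1 + N^2 - \dim_k H^0$, are all sound. The paper itself states this proposition without proof, as background imported from Mazur's and Kisin's work, and your argument is precisely the standard one given in those references, so there is nothing in the paper for it to diverge from.
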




Let $F_{\bar{\rho}}$ be a deformation functor and let $\underline{P}$ be the category of pairs $(A,V_A)$ with $A\in{\underline{\hat{Ar}}}$ and $V_A\in{F_{\bar{\rho}}(A)}$. Let $\underline{D}$ be a full subcategory of $\underline{P}$. We say that $\underline{D}$ is a {\it deformation condition} if the following conditions hold: 
\begin{enumerate}
	\item if $(A,V_A)\rightarrow{(B,V_B)}$ is a morphism in $\underline{P}$ and $(A,V_A)\in{\underline{D}}$, then $(B,V_B)\in{\underline{D}}$;
	\item if $(A,V_A)\rightarrow{(B,V_B)}$ is an injective morphism in $\underline{P}$ and $(B,V_B)\in{\underline{D}}$, then $(A,V_A)\in{\underline{D}}$;
	\item if $(A\times_{C}B,V)$ lies in ${\underline{D}}$, then also the projections $(A,V_A)$ and $(B,V_B)$ do. 
\end{enumerate}
 
Given a deformation condition $\underline{D}$, we can consider the functor $F_{\bar{\rho},\underline{D}}:\underline{\hat{Ar}}\rightarrow{\underline{Sets}}$ that sends a ring $A\in{\underline{\hat{Ar}}}$ to the set of deformations $\rho$ of $\bar{\rho}$ to $A$ such that $(A,V_{\rho})\in{\underline{D}}$. If $F_{\bar{\rho}}$ is representable by a ring $R_{\bar{\rho}}$, then $F_{\bar{\rho},\underline{D}}$ is representable, too, by a quotient of $R_{\bar{\rho}}$. Moreover the tangent space $F_{\bar{\rho},\underline{D}}(k[\epsilon])$ is a $k$-vector subspace of $F_{\bar{\rho}}(k[\epsilon])$.
\vskip 0.5cm

Given a Galois representation $\bar{\rho}:G_S\rightarrow{GL_N(k)}$ a natural way to attach a deformation condition is the following: for every $\ell\in{S}$ we consider the restriction $\bar{\rho}_{\ell}:G_{\ell}\rightarrow{GL_N(k)}$ together with a local deformation condition $\underline{D_{\ell}}$. Then we can define a global deformation condition $\underline{D}$ given by the objects $(A,V_A)\in{\underline{P}}$ whose local restriction to $\ell$ lies in $\underline{D_{\ell}}$ for every $\ell\in{S}$.
\vskip 0.5cm

Another important example of deformation condition is given by the fixed determinant. Let $\chi:G_S\rightarrow{W(k)^*}$ be a linear character. We say that a representation $\rho:G_S\rightarrow{GL_N(A)}$ {\it has determinant $\chi$} if the $G_S$-action induced on the wedge product $\Lambda^N(V_{\rho})$ is given by the character
\begin{eqnarray}
\chi_A:G_S\stackrel{\chi}\rightarrow{W(k)^*}\stackrel{\phi}\rightarrow{A^*},
\end{eqnarray}
where $\phi$ is the restriction of the $W(k)$-algebra structure morphism. The subcategory $\underline{D}$ of pairs $(A,V_{\rho})$ such that $\rho$ has determinant $\chi$ is a deformation condition and the corresponing deformation functor is denoted as $F_{\bar{\rho}}^{\chi}$. Moreover we have that
\begin{eqnarray}
F_{\bar{\rho}}^{\chi}(k[\epsilon])\simeq{H^1(G_S,Ad^0(\bar{\rho}))},
\end{eqnarray}
where $Ad^0$ denotes the vector space of matrices with trace zero provided by the adjoint $G_S$-action.

\vskip 0.5cm

\vskip 1.5cm

\section{The local flat deformation functor} 
\vskip 0.5cm

In this section we want to deal with a local deformation condition which refers to the prime $p$, characteristic of the finite base field $k$. This condition was mainly studied by Ramakrishna in \cite{R} and then generalised by Conrad in \cite{C1},\cite{C2} and Kisin in \cite{K1}. From now on, we will only deal with representations of degree 2.
\vskip 0.3cm

\begin{lemma}(Ramakrishna) Let $\underline{C}$ be a full subcategory of $\underline{Rep}_k(G)$ closed under passage to subobjects, direct products and quotients and let $\underline{D}$ be the full subcategory of $\underline{P}$ of pairs $(A,V_A)$ such that $V_A\in{\underline{C}}$. Then $\underline{D}$ is a deformation condition.
\end{lemma}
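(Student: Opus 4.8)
The plan is to match each of the three axioms of a deformation condition with exactly one of the three closure properties imposed on $\underline{C}$. The key preliminary is the interpretation of the membership $V_A\in\underline{C}$: a deformation $V_A\in F_{\bar{\rho}}(A)$ is a free $A$-module of rank $N$ carrying a continuous $G$-action, and forgetting the $A$-structure down to the underlying finite $G$-module gives an object of $\underline{Rep}_k(G)$; it is this object that is required to lie in $\underline{C}$. Since the axioms only involve finite-length coefficient rings (the general $A\in\hat{Ar}$ reducing to its Artinian quotients $A/\mathfrak{m}_A^n$), I may assume $A,B,C$ are of finite length over $W(k)$, so that these underlying $G$-modules are finite. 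I also use the standard fact that a morphism $(A,V_A)\to(B,V_B)$ of $\underline{P}$ lying over $A\to B$ is one with $V_B\simeq B\otimes_A V_A$.

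For the first axiom, let $(A,V_A)\in\underline{D}$ and let $(A,V_A)\to(B,V_B)$ be a morphism of $\underline{P}$, so $V_B=B\otimes_A V_A$. First I would pick a surjection $A^{\oplus m}\twoheadrightarrow B$ of $A$-modules, which exists because $B$ has finite length, hence is finitely generated, as an $A$-module. Applying $-\otimes_A V_A$ yields a $G$-equivariant surjection $V_A^{\oplus m}\twoheadrightarrow B\otimes_A V_A=V_B$, since $G$ acts only on the $V_A$ factor. Now closure of $\underline{C}$ under finite direct products gives $V_A^{\oplus m}\in\underline{C}$, and closure under quotients then gives $V_B\in\underline{C}$, so $(B,V_B)\in\underline{D}$. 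The only delicate point is that $A\to B$ need not be surjective, which is precisely why I route the argument through $V_A^{\oplus m}$ and use the product- and quotient-closure together, rather than presenting $V_B$ as a bare quotient of $V_A$.

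The second axiom is the most transparent: for an injective morphism $(A,V_A)\to(B,V_B)$ with $(B,V_B)\in\underline{D}$, the freeness of $V_A$ makes the natural map $V_A=A\otimes_A V_A\to B\otimes_A V_A=V_B$ the monomorphism $A^N\hookrightarrow B^N$ induced by $A\hookrightarrow B$; this realises $V_A$ as a sub-$G$-module of $V_B\in\underline{C}$, so closure under subobjects yields $V_A\in\underline{C}$. The third axiom then follows formally from the first, since the projections $A\times_C B\to A$ and $A\times_C B\to B$ exhibit $(A,V_A)$ and $(B,V_B)$ as the pushforwards of $(A\times_C B,V)$ along ring maps of $\underline{\hat{Ar}}$; if $(A\times_C B,V)\in\underline{D}$, the first axiom forces both projections into $\underline{D}$. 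I expect no serious obstacle here: the entire content sits in the translation of the first paragraph, after which each axiom collapses onto a single closure hypothesis, the mild product-then-quotient combination in the first axiom being the one step that rewards a little care.
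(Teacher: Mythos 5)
Your proof is correct and follows essentially the same route as the paper's: property 1 via expressing $V_B\simeq B\otimes_A V_A$ as a quotient of a finite direct sum of copies of $V_A$ (the paper's ``the tensor product can be recovered via direct sums and quotients''), property 2 directly from closure under subobjects, and property 3 by the same mechanism applied to the projections of the fiber product. Your write-up merely makes explicit the details (the surjection $A^{\oplus m}\twoheadrightarrow B$, the reduction to Artinian quotients, and the observation that axiom 3 is a formal consequence of axiom 1) that the paper leaves implicit.
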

 
\begin{proof}
We need to prove that $\underline{D}$ satisfies the three properties of deformation conditions. Property 1 comes from the fact that, if $(A,V_A)\rightarrow{(B,V_B)}$ is a morphism in $\underline{D}$, then $V_A\otimes_{A}B\simeq{V_B}$ and the tensor product can be recovered via direct sums and quotients. Property 2 comes from the closure under subobjects. Property 3 comes from the fact that the fiber product and its projections can be constructed via direct sums and quotients, too.
\end{proof}

Let $F$ be a finite extension of $\mathbb{Q}_p$ and $\bar{\rho}:G_F\rightarrow{GL_2(k)}$ be a Galois representation. If $\rho$ is a deformation of ${\bar{\rho}}$ to a coefficient ring $A$, we say that $\rho$ is {\it flat} if there exists a finite flat group scheme $X$ over the ring of integers $O_F$ such that $V_{\rho}\simeq{X(\bar{F})}$, that is, $V_{\rho}$ is the generic fiber of $X$.
\vskip 0.2cm

\begin{prop} The subcategory $\underline{D}$ of flat deformations is a deformation condition.
\end{prop}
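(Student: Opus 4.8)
The plan is to reduce the statement to Ramakrishna's criterion just proved (the preceding Lemma). By that Lemma, it suffices to exhibit the class $\underline{C}$ of flat Galois modules --- those finite $G_F$-modules isomorphic to the generic fibre $X(\bar{F})$ of some finite flat group scheme $X$ over $O_F$ --- as a full subcategory of representations closed under passage to subobjects, finite direct products, and quotients. Once these three closure properties are verified, the subcategory $\underline{D}$ of pairs $(A,V_A)$ with $V_A\in{\underline{C}}$ is automatically a deformation condition. So the entire proof amounts to checking the three closure properties at the level of finite flat group schemes and transporting them to generic fibres.

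Direct products are the easy case: if $X,X'$ are finite flat group schemes over $O_F$, then $X\times_{O_F}X'$ is again finite and flat over $O_F$, and its generic fibre is the product $X(\bar{F})\times{X'(\bar{F})}$; hence $\underline{C}$ is closed under direct products.

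The content lies in subobjects and quotients, and here I would use the scheme-theoretic closure. Let $X/O_F$ be finite flat with generic fibre $V=X(\bar{F})$, and let $W\subseteq{V}$ be a $G_F$-stable submodule. Since $F$ has characteristic zero, $X_F$ is \'etale, so $W$ corresponds to a closed subgroup scheme $W_F\subseteq{X_F}$. First I would form the scheme-theoretic closure $\overline{W}$ of $W_F$ inside $X$; because $O_F$ is a discrete valuation ring, $\overline{W}$ is $O_F$-flat, being the closure of the generic fibre and hence $O_F$-torsion free, and it is finite as a closed subscheme of the finite scheme $X$. The closure of a subgroup scheme is again a subgroup scheme --- the group laws extend by density and separatedness --- so $\overline{W}$ is a finite flat closed subgroup scheme of $X$ with generic fibre $W$, proving $W\in{\underline{C}}$. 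For quotients I would invoke the existence of the fppf quotient $X/\overline{W}$ of a finite flat group scheme by a finite flat closed subgroup scheme, which is representable and again finite flat over $O_F$, with generic fibre $V/W$; hence $V/W\in{\underline{C}}$.

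The main obstacle is making the scheme-theoretic closure argument rigorous over the possibly ramified base $O_F$: one must check that $\overline{W}$ is genuinely a flat closed subgroup scheme, namely flatness over the discrete valuation ring together with stability of the group structure under closure, and that the fppf quotient exists and remains finite flat. These facts are standard for finite flat group schemes over a complete discrete valuation ring, and I would cite them from the treatments of Raynaud \cite{R} and Conrad \cite{C1,C2}. Moreover, the functoriality of the closure construction guarantees that the coefficient-ring action carried by a deformation is preserved, so the three operations are well defined on the pairs $(A,V_A)$ and the Ramakrishna Lemma applies verbatim.
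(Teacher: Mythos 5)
Your proof is correct and takes essentially the same route as the paper: reduce to the preceding Lemma (Ramakrishna's criterion) and verify the three closure properties via products of finite flat group schemes, the schematic closure for subobjects, and the quotient by that closure for quotients, which is exactly the paper's argument (it cites \cite[Lemma 2.1]{R} for the closure step). One small slip worth fixing: the reference \cite{R} in this paper is Ramakrishna, not Raynaud, though it is indeed the right citation for the schematic-closure fact you invoke.
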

\vskip 0.1cm
\begin{proof} it suffices to show that $\underline{D}$ satisfies lemma 3.1. 

Let $0\rightarrow{T}\rightarrow{U}\rightarrow{V}\rightarrow{0}$ be a sequence of $G$-modules such that $U$ is the generic fiber of a finite flat group scheme $X$ over $O_F$. Then we can take the schematic closure $X_1$ of $T$ in $X$ (see \cite[Lemma 2.1]{R} for details) and $X_2=X/X_1$ to see that also $T$ and $V$ are generic fibers of finite flat group schemes. This argument and the fact that a direct sum of finite flat group schemes is still a finite flat group scheme show that the subcategory of flat deformations is a deformation condition. 
\end{proof}
\vskip 0.3cm

If $\bar{\rho}$ satisfies the trivial centralizer condition ${End}_{k[G_F]}(V_{\bar{\rho}})=k$, then the deformation functor which assigns to a coefficient ring the set of deformations of $\bar{\rho}$ which are flat, called the {\it flat deformation functor} and denoted as $F^{fl}$, is representable by a noetherian ring $R_p^{fl}$, which is called the {\it local flat universal deformation ring}. We want to give a proof of the main result of representability for this condition, which was proven by Ramakrishna for $p\ne{2}$ and by Conrad for all cases. First we need some technical data
\vskip 0.2cm

\begin{defin} Let $\phi$ denote the absolute Frobenius morphism. A {\it Fontaine-Lafaille module} is a $W(k)$-module $M$ provided with a decreasing, exhaustive, separated filtration of $W(k)$-submodules $\{M_i\}$ such that, for every index $i$, there exists a $\phi$-semilinear map $\phi_i:M_i\rightarrow{M}$ with the property that $\phi_i(x)=p\phi_{i+1}(x)$ for every $x\in{M}$. 
\end{defin}
\vskip 0.2cm

We denote by $MF$ the category of Fontaine-Lafaille modules over $W(k)$. Moreover we denote by $MF_{tor}^f$ the full subcategory of objects such that $M$ has finite length and $\sum{Im(\phi_i)}=M$ and by $MF_{tor}^{f,j}$ the subcategory of objects such that $M_0=M$ and $M_j=0$. Finally we say that a Fontaine-Lafaille module is {\it connected} if the morphism $\phi_0$ is nilpotent The main result about Fontaine-Lafaille modules is the following

\begin{teo}[Fontaine-Lafaille] For every $j\le{p}$ there exists a faithful exact contravariant functor
\begin{eqnarray} \label{Laffunc}
MF_{tor}^{f,j}\rightarrow{Rep_{\mathbb{Z}_p}^f(G)},
\end{eqnarray} 
which is fully faithful if $j<p$ and becomes fully faithful when restricted to the subcategory of connected Fontaine-Lafaille modules if $j=p$. Morevoer $MF_{tor}^{f,2}$ is antiequivalent to the category of finite flat group schemes over $W(k)$
\end{teo}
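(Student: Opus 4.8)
The plan is to reproduce the Fontaine--Lafaille construction: build the functor explicitly from a crystalline period ring and then verify faithfulness, exactness and full faithfulness by a d\'evissage to the case of modules killed by $p$. First I would introduce the ring $A_{cris}$ together with its Frobenius $\phi$, its decreasing divided-power filtration $\{Fil^i A_{cris}\}$, and its continuous $G=G_{\mathbb{Q}_p}$-action, recalling that $A_{cris}$ is the $p$-adic completion of the divided-power envelope of $W(\mathcal{O}^{\flat})$ along the kernel of the canonical map to $\mathcal{O}_{\mathbb{C}_p}$. The divided Frobenii $\phi_i=\phi/p^i$ defined on $Fil^i A_{cris}$ are precisely the structures that match the maps $\phi_i$ in the definition of a Fontaine--Lafaille module, so one may set
\[ T^{*}(M)=\mathrm{Hom}_{Fil,\phi}\bigl(M,\ A_{cris}\otimes_{\mathbb{Z}_p}(\mathbb{Q}_p/\mathbb{Z}_p)\bigr), \]
the $W(k)$-linear homomorphisms compatible with the filtrations and with all the $\phi_i$; the $G$-action on the period ring makes $T^{*}(M)$ a finite $\mathbb{Z}_p[G]$-module, and contravariance is built into the Hom.

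\textbf{Faithfulness and exactness.} Both reduce to the $p$-torsion case by the filtration of $M$ by $p^{n}M$. When $M$ is killed by $p$ one may replace $A_{cris}$ by its reduction, and $T^{*}(M)$ becomes the space of $k$-linear, filtration- and Frobenius-compatible maps into an explicit $k$-algebra; faithfulness then amounts to the statement that these maps separate the points of $M$, a finite linear-algebra check. For exactness one applies $T^{*}$ to $0\to M'\to M\to M''\to 0$ and proves, by induction on length, that the resulting sequence of $\mathbb{Z}_p[G]$-modules is again short exact; the essential input is the vanishing of the relevant $\mathrm{Ext}^{1}$, which holds in the range $j\le p$ because the divided powers $\gamma_{n}$ of $A_{cris}$ stay $p$-integral exactly when the filtration has length at most $p$.

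\textbf{Full faithfulness (the main obstacle).} The hard part is to show for $j<p$ that
\[ \mathrm{Hom}_{MF}(M,N)\ \simeq\ \mathrm{Hom}_{\mathbb{Z}_p[G]}\bigl(T^{*}(N),T^{*}(M)\bigr). \]
I would do this by constructing a natural comparison isomorphism between $M\otimes_{W(k)}A_{cris}$ and the tensor product of $A_{cris}$ with the $\mathbb{Z}_p$-dual of $T^{*}(M)$, compatible with $\phi$ and with the filtration, and then recovering $M$ from $T^{*}(M)$ by taking $\phi$-invariants inside $Fil^{0}$. The strict bound $j<p$ is unavoidable here: it is exactly what makes the relevant map $1-\phi_{\bullet}$ on the filtered part of the period ring surjective, i.e. what validates Fontaine's fundamental exact sequence $0\to\mathbb{Z}_p\to Fil^{\bullet}A_{cris}\xrightarrow{1-\phi_{\bullet}}A_{cris}\to 0$ in the torsion setting. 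At the boundary $j=p$ the unit-root (multiplicative) part of $M$ obstructs this surjectivity, which is why full faithfulness must be restricted to connected modules, whose Frobenius $\phi_{0}$ is nilpotent and therefore has no unit-root part.

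\textbf{Finite flat group schemes.} Finally, the antiequivalence of $MF^{f,2}_{tor}$ with the category of finite flat group schemes over $W(k)$ I would deduce by matching the functor against contravariant Dieudonn\'e theory. The condition $M_{0}=M$, $M_{2}=0$ says that $M$ carries a single nontrivial filtration step $M_{1}$, so the data $(M,M_{1},\phi_{0},\phi_{1})$ is precisely that of a Dieudonn\'e module equipped with its Hodge (Lie-algebra) filtration, which is the linear algebra classifying a finite flat group scheme killed by a power of $p$ over $W(k)$. Composing with the classical Dieudonn\'e equivalence gives the antiequivalence, and compatibility on generic fibres — so that $T^{*}(M)$ is the Galois module of $\bar{\mathbb{Q}}_p$-points of the associated group scheme — follows from the crystalline comparison for $p$-divisible groups.
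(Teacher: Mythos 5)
You should first be aware that the paper does not prove this theorem at all: its entire ``proof'' is the citation \cite[Ch.8--9]{FL}, so your sketch has to be measured against the Fontaine--Laffaille literature itself rather than against an argument in the paper. Measured that way, the first three steps of your outline follow the standard (indeed the original) route: the contravariant functor $M\mapsto \mathrm{Hom}_{Fil,\phi}\bigl(M,A_{cris}\otimes_{\mathbb{Z}_p}\mathbb{Q}_p/\mathbb{Z}_p\bigr)$, d\'evissage to the killed-by-$p$ case for faithfulness and exactness, and the fundamental exact sequence / comparison isomorphism for full faithfulness when $j<p$, with connectedness ($\phi_0$ nilpotent) rescuing the boundary case $j=p$. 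That part is a faithful, if necessarily compressed, account of how the theorem is actually proved.

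The genuine gap is in your final paragraph, on the antiequivalence of $MF_{tor}^{f,2}$ with finite flat group schemes over $W(k)$. Classical Dieudonn\'e theory classifies finite group schemes over the perfect \emph{residue field} $k$, not over $W(k)$; a finite flat group scheme over $W(k)$ is not determined by its special fibre, so the antiequivalence cannot be obtained by ``composing with the classical Dieudonn\'e equivalence''. Your assertion that a Dieudonn\'e module equipped with a Hodge-type filtration ``is the linear algebra classifying a finite flat group scheme killed by a power of $p$ over $W(k)$'' is not a reduction to known theory --- it \emph{is} the theorem to be proved (Fontaine's classification of finite flat group schemes over $W(k)$ by finite Honda systems, extended by Conrad at $p=2$ for connected objects), and its proof requires an actual construction of the group scheme from the linear-algebra data, not a matching of structures; as written this step is circular. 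A minor further slip: the obstruction to fullness at $j=p$ comes from the part of $M$ on which $\phi_0$ is bijective, i.e.\ the \'etale (unit-root) part, not the multiplicative part; your parenthetical identification conflates the two, although the use you make of connectedness is correct.
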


\begin{proof} See \cite[Ch.8-9]{FL} for a proof and description of the functor.
\end{proof}
\vskip 0.2cm

We say that a representation $\rho$ has weight $j$ if it comes from a Fontaine-Lafaille module lying in $MF_{tor}^{f,j}$ and we denote by $F_{j}$ the subfunctor of $F_{\bar{\rho}}$ given by deformations of $\bar{\rho}$ which are of weight $j$.
It follows that if $\bar{\rho}$ is flat, then the functors $F_2$ and $F_{fl}$ are the same, therefore we will identify them in the rest of the chapter.
\vskip 0.3cm
We can now prove the main result for flat deformation functor. The proof is due to Ramakrishna for the case $p>2$ (see \cite[section 3]{R}); then Conrad has shown (see \cite{CDT}) that the proof works also in the case $p=2$, since the Fontaine-Lafaille module used is connected.
\vskip 0.2cm

\begin{teo}(Ramakrishna) Let $\bar{\rho}:G_{\mathbb{Q}_p}\rightarrow{GL_2(k)}$ be a flat residual Galois representation with trivial centralizer and such that $det(\bar{\rho})=\chi$, where $\chi$ is the cyclotomic character. Then
\begin{eqnarray}
R_p^{fl}(\bar{\rho})\simeq{W(k)[[T_1,T_2]]}.
\end{eqnarray}
\end{teo}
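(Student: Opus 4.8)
The plan is to transport the entire problem into the linear-algebraic world of Fontaine-Lafaille modules, where the relevant objects are finite-length filtered $W(k)$-modules equipped with semilinear Frobenius maps, and then to compute directly. Since $\bar{\rho}$ is flat we have already identified $F^{fl}$ with the weight-$2$ functor $F_2$, and an object of $MF_{tor}^{f,2}$ is, by Theorem 3.4, the same datum as a finite flat group scheme over $W(k)$, hence the same datum as its generic-fibre Galois representation. The first step is therefore to promote this object-level anti-equivalence to an isomorphism of deformation functors: because the Fontaine-Lafaille functor is fully faithful for $j=2<p$ (and, in the case $p=2$, because the module attached to a flat $\bar{\rho}$ is connected, which is exactly the hypothesis under which Theorem 3.4 stays fully faithful), it is compatible with base change along $\underline{\hat{Ar}}$, so $F^{fl}$ is isomorphic to the functor of deformations of the Fontaine-Lafaille module $\bar{M}$ attached to $\bar{\rho}$. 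This reduction is what makes an explicit computation possible.

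Next I would write down the shape of a deformation of $\bar{M}$ over a coefficient ring $A$. Such a deformation is a free rank-$2$ $A$-module $M$ carrying a two-step filtration $M=M_0\supseteq M_1\supseteq M_2=0$, with $M_1$ a direct-summand line reducing to the Hodge line of $\bar{M}$, together with $\phi$-semilinear maps $\phi_0\colon M_0\to M$ and $\phi_1\colon M_1\to M$ satisfying the compatibility $\phi_0|_{M_1}=p\,\phi_1$ and the admissibility condition $\mathrm{Im}(\phi_0)+\mathrm{Im}(\phi_1)=M$. Choosing a basis $e_1,e_2$ of $M$ adapted to the filtration, so that $M_1=Ae_2$, the Frobenius datum is encoded by the matrix whose columns are $\phi_0(e_1)$ and $\phi_1(e_2)$; admissibility says this matrix is invertible, and the condition $\det\bar{\rho}=\chi$ cyclotomic translates into a normalisation fixing the determinant of this Frobenius matrix. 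The relation $\phi_0|_{M_1}=p\,\phi_1$ then determines $\phi_0(e_2)$ from $\phi_1(e_2)$, so no further data are needed.

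With this description I would count parameters and exhibit $T_1,T_2$. After using the change-of-filtered-basis automorphisms (equivalently, passing to the universal object for the representable functor, whose representability is guaranteed by the trivial-centralizer hypothesis $\mathrm{End}_{k[G_{\mathbb{Q}_p}]}(V_{\bar{\rho}})=k$), the moduli of admissible Frobenius data with the prescribed determinant depends on exactly two free parameters over $W(k)$: one records the position of the Hodge line $M_1$ and the other the remaining free entry of the determinant-normalised Frobenius matrix. I would confirm the count by computing the tangent space $F^{fl}(k[\epsilon])$ inside the Fontaine-Lafaille category and checking that it is $2$-dimensional, consistent with $R_p^{fl}(\bar{\rho})\simeq W(k)[[T_1,T_2]]$.

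The crux, and the step I expect to be the main obstacle, is to prove formal smoothness over $W(k)$, so that the ring is genuinely a power series ring and not a proper quotient. I would do this by a direct lifting argument along a small surjection $A'\twoheadrightarrow A$ in $\underline{\hat{Ar}}$: lift the Hodge line $M_1$ and the Frobenius matrix arbitrarily to $A'$, and then correct the lift so that the compatibility $\phi_0|_{M_1}=p\,\phi_1$ and admissibility both persist. The delicate point is that these two conditions must be maintained simultaneously; here the factor $p$ in the compatibility relation, together with the nilpotence of the kernel of $A'\to A$, is what forces the obstruction to vanish, and the connectedness of $\bar{M}$ is what keeps the argument valid when $p=2$. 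Verifying this unobstructedness — equivalently, the vanishing of the relevant second-order obstruction in $MF_{tor}^{f,2}$ — is the technical heart of the proof; once it is in place, a two-dimensional tangent space together with the absence of obstructions yields $R_p^{fl}(\bar{\rho})\simeq W(k)[[T_1,T_2]]$.
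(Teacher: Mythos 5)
Your overall architecture tracks the paper's more closely than you may expect: both proofs pass through the Fontaine--Lafaille dictionary to turn flat deformations into filtered modules with Frobenius (with the same remark about connectedness rescuing full faithfulness when $p=2$), and both pin the tangent space down by a matrix computation in that category. Where you genuinely diverge is the ``no relations'' step. The paper writes $R_2(\bar{\rho})=\mathbb{Z}_p[[T_1,T_2]]/I$ and kills $I$ by counting $\mathbb{Z}_p/p^l$-valued points of the functor ($p^{4(l-1)}$ Frobenius matrices modulo filtration-preserving conjugation, giving $p^{2(l-1)}$ points, the same count as for the power series ring, whence every $f\in I$ vanishes identically); you instead propose formal smoothness by lifting along small surjections. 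Your route is valid and is in fact easier than you anticipate: the compatibility $\phi_0|_{M_1}=p\,\phi_1$ is a definition (it determines $\phi_0$ on $M_1$ rather than constraining it), a free direct-summand line always lifts through a nilpotent thickening, and admissibility is invertibility of the Frobenius matrix, which persists under an arbitrary lift of its entries because $\ker(A'\to A)$ is nilpotent. So there is no second-order obstruction to compute; the real content lies in the functor-level equivalence you set up at the start and in the tangent-space count.

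There is, however, one genuine error in your moduli description. You assert that $\det\bar{\rho}=\chi$ ``translates into a normalisation fixing the determinant of the Frobenius matrix.'' It does not: that hypothesis concerns only the residual representation, and the deformations in this theorem are not required to have cyclotomic determinant (an unramified twist of a flat deformation is again flat, so the determinant genuinely moves). Freezing the Frobenius determinant describes the fixed-determinant flat functor, which is precisely Conrad's refinement (Theorem 3.7 of the paper) and is represented by $\mathbb{Z}_p[[T]]$ --- one variable, not two. Your total of two parameters comes out right only through a compensating miscount: you treat the position of the Hodge line as an independent modulus while quotienting the Frobenius data only by filtration-preserving base changes, but the full gauge group $1+M_2(m_A)$ acts transitively on the lines lifting $\bar{M}_1$, so the line is absorbed once the full quotient is taken. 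The correct count, with no determinant condition, is the paper's: $4$ entries of the Frobenius matrix, minus the image of the $3$-dimensional space of filtration-preserving matrices $R$ under $R\mapsto[R,X_M]$, whose kernel is the $1$-dimensional $End_{MF}(M)$ given by trivial centralizer, i.e.\ $4-(3-1)=2$. If you run your tangent-space check without the determinant normalisation you will land on this; with it, you would get $1$ and prove the wrong theorem.
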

\vskip 0.1cm
\begin{proof} We split the proof in two parts. Suppose first that $k=\mathbb{F}_p$ and $\bar{\rho}$ is the representation attached to the $p$-torsion points of an elliptic curve $E$ over $\mathbb{Q}_p$ with good supersingular reduction. We prove the theorem in this particular case, where computations are relatively easy, and then pass to the general case.

In the particular case we have chosen, we know that $\bar{\rho}$ satisfies the trivial centralizer hypothesis and is of weight 2. We calculate the tangent space $F_2(\mathbb{F}_p[\epsilon])$. Viewing $\mathbb{F}_p[\epsilon]^2$ as a 4-dimensional $\mathbb{F}_p$-vector space, we can see an element $\rho\in{F_2(\mathbb{F}_p[\epsilon])}$ as a matrix

\begin{eqnarray}
\rho(g)=\begin{pmatrix}\bar{\rho}(g) & 0 \\ R_{g} & \bar{\rho}(g)\end{pmatrix}
\end{eqnarray}
and such a representation gives clearly an element of $Ext_{2,p}^1(V_{\bar{\rho}},V_{\bar{\rho}})$, the extensions in the category of weight 2 representations which are killed by $p$. It is immediate to check that equivalence of litings correspond to equivalent extensions.

Let $M$ be the Fontaine-Lafaille module associated to $V_{\bar{\rho}}$ via Theorem 3.4. By full faithfulness of the functor, we have that $Ext_{2,p}^1(V_{\bar{\rho}},V_{\bar{\rho}})=Ext_{2,p}^1(M,M)$ and that $End_{MF}(M)=\mathbb{F}_p$.

We want to write the module in a compactified manner in terms of a $2\times{2}$-matrix $X_M$. For that we use the fact that $M_1$ is 1-dimensional (it will be proved shortly) and that $\phi_0(M_1)=0$. Then we write

\begin{eqnarray}
\phi_0=\begin{pmatrix} \alpha & 0 \\ \beta & 0\end{pmatrix},\ \ \ \phi_1=\begin{pmatrix} * & \gamma \\ * & \delta\end{pmatrix},\ \ X_M=\begin{pmatrix} \alpha & \gamma \\ \beta & \delta \end{pmatrix}.
\end{eqnarray}

The matrix $X_M$ encodes all the informations of the structure of $M$. We also want to write the elements of $Ext_{2,p}^1(M,M)$ via these matrices. If $N$ is such an element, we have

\begin{eqnarray}
X_N=\begin{pmatrix} X_M & C \\ 0 & X_M \end{pmatrix}, \ \ C\in{M_2(\mathbb{F}_p)}.
\end{eqnarray}

The matrix $C$ corresponds to an element of $Hom(M,M)$. If $N'$ is another element of $Ext_{2,p}^1(M,M)$ and $D$ is the $2\times{2}$ matrix in its upper triangular part, then it represents the same extension of $N$ if and only if there exist a matrix $\begin{pmatrix} Id & R \\ 0 & Id\end{pmatrix}\in{M_4(\mathbb{F}_p)}$ such that

\begin{eqnarray}
\begin{pmatrix}Id & R \\ 0 & Id\end{pmatrix}\begin{pmatrix}X_M & C \\ 0 & X_M\end{pmatrix}=\begin{pmatrix}X_M & D \\ 0 & X_M\end{pmatrix}\begin{pmatrix}Id & R \\ 0 & Id\end{pmatrix}
\end{eqnarray}
and this happens if and only if $C-D=[R,X_M]$. Moreover $R$ must preserve the filtration of $M$, because the isomorphism between $N$ and $N'$ does so. Let $\mathfrak{H}$ be the set of such matrices $R$. It follows that

\begin{eqnarray}
Ext_{2,p}^1(M,M)\simeq{Hom(M,M)/\{[R,X_M]:\ R\in{\mathfrak{H}}\}}
\end{eqnarray}

Now we know that $dim_{\mathbb{F}_p}M_0=2$ and $dim_{\mathbb{F}_p}M_2=0$. If $dim_{\mathbb{F}_p}M_1\ne{1}$ then any endomorphism of $M$ does not need to respect any filtration structure and therefore the centralizer of $X_M$ in $M_2(\mathbb{F}_p)$, which has at least dimension 2, would belong to $End_{MF}(M)$; this is impossible because the endomorphism ring is 1-dimensional. Therefore $dim_{\mathbb{F}_p}M_1={1}$.

Now we can compute the dimension of the tangent space: observe that $Hom(M,M)$ has dimension 4, the set of matrices $R$ which preserves the filtration of $M$ has dimension 3 and the kernel of the map $R\rightarrow{[R,X_M]}$ has dimension 1 (it is isomorphic to $End_{MF}(M))$. Therefore the tangent space has dimension $4-(3-1)=2$.

Now we have that $R_2(\bar{\rho})=\mathbb{Z}_p[[T_1,T_2]]/I$. We count the number of $\mathbb{Z}_p/p^{l}$-valued points of the universal ring, which is the number of objects $N\in{MF_{tor}^{f,2}}$ which are free $\mathbb{Z}_p/p^{l}$-modules of rank 2. If $N_p$ denotes the kernel of multiplication by $p$ in $N$, then we need $N_p\simeq{M}$, in terms of matrices, since  $X_N\equiv{X_M}\pmod{p}$. Since $X_N\in{M_2(\mathbb{Z}_p/p^l)}$ and we have to consider modulo $p$, there are $p^{4(l-1)}$ such matrices. We have to consider them modulo isomorphism. Now if $X_{N_1}\simeq{X_{N_2}}$, then there exists a matrix $R\in{M_2(\mathbb{Z}_p/p^l)}$ which respects the filtration of $M$ such that $RX_{N_1}=X_{N_2}R$; there are $p^{3(l-1)}$ such matrices and $p^{l-1}$ lie in the center of $M_2(\mathbb{Z}_p/p^l)$, therefore commute with all the $X_N$. So the number of $\mathbb{Z}_p/p^l$-valued points is $p^{4(l-1)}/(p^{3(l-1)}/p^{l-1})=p^{2(l-1)}$. Observe that this is the same number of $\mathbb{Z}_p/p^l$-valued points of $\mathbb{Z}_p[[T_1,T_2]]$.

Let now $f\in{I}$ and $(x,y)\in{(\mathbb{Z}_p/p^l)^2}$, then $f(x,y)\equiv{0}\pmod{p^l}$ for every positive integer $l$. It follows that, taking liftings to characteristic zero, $f(x,y)=0$ for all $(x,y)\in{(p\mathbb{Z}_p)^2}$ and therefore $f=0$. So $I=0$ and $R_2(\bar{\rho})=\mathbb{Z}_p[[T_1,T_2]]$.
\vskip 0.3cm

Now we can pass to the proof of the theorem in the general case and remove the hypothesis that $k=\mathbb{F}_p$ and that $\bar{\rho}$ is the representation coming from an elliptic curve. A lemma of Serre (whose proof can be found in \cite{R}) tells us that $\bar{\rho}$ has restriction to inertia given by

\begin{eqnarray}
\bar{\rho}|_I=\begin{pmatrix}\psi & 0 \\ 0 & \psi^p\end{pmatrix}
\end{eqnarray}
where $\psi$ is a fundamental character of level 2. For such a representation we will compute both the ``unrestricted" universal ring and the flat one. First of all we want to show that $H^2(G,Ad(\bar{\rho}))=0$. By Tate local duality we have that $H^2(G,Ad(\bar{\rho}))=H^0(G,Ad(\bar{\rho})^*)=(Ad(\bar{\rho})^*)^G$. Let $\phi\in{(Ad(\bar{\rho})^*)^G}$, we want to show that its kernel is 4-dimensional and therefore $\phi=0$. Let $R\in{Ad(\bar{\rho})}$, we have $\phi(gR)=g\phi(R)$, where the $G$-action is given by conjugacy composed with $\bar{\rho}$ on the left and by determinant on the right. It follows that, if $g\in{I}$, then $\bar{\rho}(g)R\bar{\rho}(g)^{-1}-det(\bar{\rho}(g))R\in{Ker(\phi)}$. Then, if we define the map
\begin{eqnarray}
T_g:R\rightarrow{\bar{\rho}(g)R\bar{\rho}(g)^{-1}-det(\bar{\rho}(g))R},
\end{eqnarray}
it suffices to show that there exists $g\in{I}$ such that $Ker(T_g)=0$. We choose a $g$ such that $\psi(g)=\alpha$ where $\alpha$ is an element of order $p^2-1$ in $k^*$. Then, taking explicit formulas

\begin{eqnarray}
R=\begin{pmatrix}x & y \\ z & w\end{pmatrix},\ \ T_g(R)=\begin{pmatrix}x(1-\alpha^{p+1}) & y(\alpha^{1-p}-\alpha^{p+1}) \\ z(\alpha^{p-1}-\alpha^{p+1}) & w(1-\alpha^{p+1})\end{pmatrix}
\end{eqnarray}
and the last matrix is zero if and only if $R=0$. Then our claim is proved.

Now we use the formula for the Euler-Poincar\'e characteristic for $Ad(\bar{\rho})$. Let $h^i=dim(H^i(G,Ad(\bar{\rho})))$. We have
\begin{eqnarray}
c_{EP}(Ad(\bar{\rho}))=h^0-h^1+h^2=-dim_{k}Ad(\bar{\rho}).
\end{eqnarray}
We have that $h^2=0$, $h^0=1$ (it is the trivial centralizer condition) and $dim_{k}Ad(\bar{\rho})=4$, therefore $h^1=5$. It follows that the unrestricted universal ring for such a representation is isomorphic to $W(k)[[T_1,T_2,T_3,T_4,T_5]]$.

The flat deformation ring can be computed by means of calculations similar to the ones performed in the case $k=\mathbb{F}_p$, except that we have to consider Fontaine-Lafaille modules over $W(k)$ instead of $\mathbb{Z}_p$ and all the dimensions have to be computed over $k$. We obtain again that $R_2(\bar{\rho})=W(k)[[T_1,T_2]]$. In particular $R_2(\bar{\rho})$ is a quotient of $R(\bar{\rho})$ and the surjective map between them has a $3$-dimensional kernel. The theorem is therefore proved.
\end{proof}

\vskip 0.3cm

Now we give a refinement of this result, which is due to Conrad \cite{C1}.
\vskip 0.2cm
\begin{teo} Let $\bar{\rho}$ be as in the previous theorem and let $F^{fl,\chi}$ be the subfunctor of flat deformations of $\bar{\rho}$ which have fixed determinant $\chi$. Then this functor is representable by the ring
\begin{eqnarray}
R_p^{fl,\chi}(\bar{\rho})\simeq{\mathbb{Z}_p[[T]]}.
\end{eqnarray}
\end{teo}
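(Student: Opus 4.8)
The plan is to reduce the fixed-determinant flat functor to the concrete Fontaine--Lafaille picture already set up in the proof of Theorem 3.6, and then to rerun both the tangent-space computation and the point-count with the single extra constraint that the determinant character is pinned to $\chi$. Recall that a weight-$2$ killed-by-$p$ module $M$ is encoded by the matrix $X_M=\begin{pmatrix}\alpha & \gamma \\ \beta & \delta\end{pmatrix}$, and that the cyclotomic determinant corresponds, via the wedge $\Lambda^2 M$, to fixing the value of $\det(X_M)=\alpha\delta-\beta\gamma$ to the unit $u\in W(k)^*$ attached to $\chi$. Thus the entire computation of Theorem 3.6 can be repeated, except that every occurrence of ``rank-$2$ module with $X\equiv X_M\pmod p$'' is replaced by ``$\dots$ and $\det(X)=u$'', and every occurrence of $Hom(M,M)$ in the tangent space is replaced by its fixed-determinant subspace.

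For the tangent space I would first identify $F^{fl,\chi}(k[\epsilon])$ with the subspace of $Ext_{2,p}^1(M,M)\simeq Hom(M,M)/\{[R,X_M]:R\in\mathfrak{H}\}$ cut out by the vanishing of the first-order variation of the determinant. Writing a lift as $X_M+\epsilon C$, the condition $\det(X_M+\epsilon C)=\det(X_M)$ becomes the single linear equation $\mathrm{tr}(X_M^{-1}C)=0$, which defines a $3$-dimensional subspace of the $4$-dimensional $Hom(M,M)$. The key observation is that every coboundary automatically lies in this subspace, since $\mathrm{tr}(X_M^{-1}[R,X_M])=\mathrm{tr}(R)-\mathrm{tr}(R)=0$; because the coboundary space already has dimension $2$ (it is $\mathfrak{H}$ modulo the $1$-dimensional $End_{MF}(M)$), the fixed-determinant tangent space has dimension $3-2=1$. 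This is the expected drop, matching $F^{fl,\chi}(k[\epsilon])\simeq H^1(G,Ad^0(\bar{\rho}))$.

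Next I would run the point-count to show that the functor is unobstructed, so that $R_p^{fl,\chi}(\bar{\rho})$ is a power series ring in a single variable. For each $l$ I would count the objects $N$ of $MF_{tor}^{f,2}$ that are free of rank $2$ over $W(k)/p^l$, reduce to $M$ modulo $p$, and satisfy $\det(X_N)=u$. Imposing $\det(X_N)=u$ on the matrices congruent to $X_M$ modulo $p$ removes one $W(k)/p^{l-1}$-worth of freedom (the differential of $\det$ is surjective because $X_M$ is invertible), and dividing by the same filtration-preserving change-of-basis group as before --- which visibly preserves the fixed-determinant locus --- leaves exactly the number of $W(k)/p^l$-points of a one-variable power series ring. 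Comparing with the candidate ring and arguing as at the end of the proof of Theorem 3.6 (a power series vanishing at all small integral points is zero) shows that the defining ideal is trivial, whence $R_p^{fl,\chi}(\bar{\rho})\simeq W(k)[[T]]$; in the arithmetic case $k=\mathbb{F}_p$ this is precisely $\mathbb{Z}_p[[T]]$.

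The main obstacle I expect is the bookkeeping of the determinant, on two fronts: verifying that ``$\det=\chi$'' really translates into the single condition $\det(X_M)=u$ under the Fontaine--Lafaille correspondence (i.e.\ that $\Lambda^2 M$ carries the cyclotomic filtration, with $\phi_1$ on it recorded by $\det X_M$), and confirming that the change-of-basis group used to mod out in the point-count is genuinely unchanged by the determinant constraint and still acts with the same orbit sizes. Everything else is a faithful repetition of Theorem 3.6 with the relevant dimensions and cardinalities dropped by one.
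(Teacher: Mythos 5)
Your proposal is correct in outline, but it takes a genuinely different route from the paper: for this statement the paper offers no argument of its own, it simply cites Conrad (\cite[Ch.~4, Th.~4.1.2]{C1}), whereas you rerun the Fontaine--Laffaille computation of Theorem 3.6 with the determinant pinned. Your two key mechanisms do work. First, $X_M$ is invertible (this follows from the condition $\sum Im(\phi_i)=M$, since $Im(\phi_0)$ and $\phi_1(M_1)$ are spanned by the two columns of $X_M$), so the first-order determinant condition is the nonzero linear functional $C\mapsto\mathrm{tr}(X_M^{-1}C)$, and the identity $\mathrm{tr}(X_M^{-1}(RX_M-X_MR))=\mathrm{tr}(R)-\mathrm{tr}(R)=0$ shows every coboundary $[R,X_M]$ satisfies it; since the coboundary space has dimension $2$, the fixed-determinant tangent space has dimension $3-2=1$, giving a surjection $W(k)[[T]]\rightarrow{R_p^{fl,\chi}}$ (representability itself being guaranteed by the deformation-condition formalism of Section 2). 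Second, conjugation preserves determinants, so the orbit size $p^{2(l-1)}$ from Theorem 3.6 is unchanged, while the determinant-one lifts of $X_M$ number $p^{3(l-1)}$; this yields $p^{l-1}$ points over $\mathbb{Z}_p/p^l$, exactly the point count of $\mathbb{Z}_p[[T]]$, and the vanishing-ideal argument closes the proof as at the end of Theorem 3.6. As for what each approach buys: yours is self-contained and elementary, in the spirit of Ramakrishna's original count; the citation to Conrad buys uniformity precisely at the two places you flag as obstacles, which are genuinely nontrivial. The compatibility of the Fontaine--Laffaille functor with $\Lambda^2$ sits at the boundary weight (when $p=2$ one has $j=2=p$, where only the connected subcategory is fully faithful, and this case matters for the paper's application at $p=2$), and over $k\neq\mathbb{F}_p$ the maps $\phi_i$ are only Frobenius-semilinear, so ``$\det X_N=1$'' is invariant under change of basis only up to norm-equivalence. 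These are the points where Conrad's theorem does real labor; that said, your argument is no less rigorous than the paper's own proof of Theorem 3.6, which waves the same hands in the general-$k$ case.
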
 \label{Conradteo}
\vskip 0.1cm

\begin{proof} For the proof see \cite[Ch. 4, Th.4.1.2]{C1}.
\end{proof}

\vskip 0.3cm

EXAMPLE: Let $E$ be an elliptic curve over $\mathbb{Q}_p$ that has supersingular reduction in $p$. Let $\bar{\rho}:G_{\mathbb{Q}_p}\rightarrow{GL_2(\mathbb{F}_p)}$ be the representation coming from the Galois action on the $p$-torsion points of $E$. Then, by the results of \cite{C2}, ${\bar{\rho}}$ is absolutely irreducible and therefore the functor $F_{\bar{\rho}}^{fl}$ is representable. Therefore, applying Ramakrishna's theorem, we have that the flat universal deformation ring is $\mathbb{Z}_p[[T_1,T_2]]$.
\vskip 1cm

\section{Steinberg representations at primes $\ell\ne{p}$}
\vskip 0.5cm

Now we want to analyse local conditions at finite primes which are different from $p$. We continue to assume that the representation space $V_{\bar{\rho}}$ has dimension 2.

\begin{defin}: A 2-dimensional representation $\bar{\rho}:G_{\ell}\rightarrow{GL_2(k)}$ is called of Steinberg type if it is a non-split extension of a character $\lambda:G_{\ell}\rightarrow{k^*}$ by the twist $\lambda(1)=\lambda\otimes{\chi_p}$ of $\lambda$ by the $p$-adic cyclotomic character $\chi_p$.
\end{defin} \label{Stein}
\vskip 0.2cm

A representation of Steinberg type has the matricial form
\begin{eqnarray}
\bar{\rho}(g)=\begin{pmatrix} \lambda(1)(g) & * \\ 0 & \lambda(g) \end{pmatrix}\ \ \ \forall{g}\in{I_{\ell}}
\end{eqnarray}
Observe that since $\ell\ne{p}$ the mod $p$ cyclotomic character is unramified and, if $p$ is not a square mod $\ell$, it also happens that $\chi_p$ and its twists are trivial. We do not impose any ramification restriction on the character $\lambda$. Up to twisting by the inverse character of $\lambda$, we may assume that $det(\bar{\rho})=\chi_p$ and that $V(\bar{\rho})(-1)^{G_{\ell}}\ne{0}$, which means that there is a subrepresentation of dimension 1 on which $G_{\ell}$ acts via $\chi_p$.
\vskip 0.3cm

We define a subfunctor 
\begin{eqnarray}
L_{\bar{\rho}}^{\chi_p}:\underline{\hat{Ar}}\rightarrow{\underline{Sets}}
\end{eqnarray}
of the deformation functor $F_{\bar{\rho}}^{\chi_p}$ as
\begin{eqnarray}
	L_{\bar{\rho}}^{\chi_p}(A)=(V_A,L_A)
\end{eqnarray}
where
\begin{itemize}
	\item $V_A$ is a deformation of $\bar{\rho}$ to $A$.
	\item $L_A$ is a submodule of rank 1 of $V_A$ on which $G_{\ell}$ acts via $\chi_p$.
\end{itemize}

We define in the same way the framed subfunctor $L_{\bar{\rho}}^{\chi_p,\Box}:\underline{\hat{Ar}}\rightarrow{\underline{Sets}}$ as

\begin{eqnarray}
	L_{\bar{\rho}}^{\chi_p,\Box}(A)=(V_A,\beta_A,L_A)
\end{eqnarray}
where
\begin{itemize}
	\item $(V_A,\beta_A)$ is a framed deformation of $\bar{\rho}$ to $A$.
	\item $L_A$ is a submodule of rank 1 of $V_A$ on which $G_{\ell}$ acts via $\chi_p$.
\end{itemize}

This is the subfunctor corresponding to liftings of Steinberg type. In the following we work with the framed setting to avoid representability problems.
\vskip 0.3cm

In order to deal with representability of deformations functors of Steingberg type, we need to recall the main definitions of formal schemes. Let $R$ be a noetherian ring and $I$ an ideal and assume that $R$ is $I$-adically complete, so that we have
\begin{eqnarray}
R=\lim_{\leftarrow}R/I^n.
\end{eqnarray} 
We define a topologycal space $Spf(R)$ in the following way: given an element $f\in{R}$ and $\bar{f}$ its reduction modulo $I$, we define $D(\bar{f})$ to be the set of prime ideals of $R/I$ not containing $\bar{f}$. Then the set $Spec(R/I)$ with the induced topology is called the {\it formal spectrum} of $R$, with respect to $I$, and denoted by $Spf(R)$. The sets $D(\bar{f})$ are a basis for the topology of $Spf(R)$.

\vskip 0.2cm

For each $f\in{R}$ we define
\begin{eqnarray}
R\langle{f^{-1}}\rangle=\lim_{\leftarrow}R[f^{-1}]/I^n
\end{eqnarray}
Then the assignment $D(\bar{f})\mapsto{R\langle{f^{-1}}\rangle}$ defines a structure sheaf on $Spf(R)$.
\vskip 0.2cm
\begin{defin} The affine formal scheme $Spf(R)$ over $R$ with respect to $I$ is the locally ringed space $(X,O_X)$, where $X=Spec(R/I)$ and $O_X(D(\bar{f}))=R\langle{f^{-1}}\rangle$ for each $f\in{R}$. 

A noetherian formal scheme is a locally ringed space $(X,O_X)$, where $X$ is a topological space and $O_X$ is a sheaf of rings over $X$ such that each point $x\in{X}$ admits a neighborhood $U$ such that $(U,O_X|_{U})$ is isomorphic to an affine formal scheme $Spf(R)$.
\end{defin}
\vskip 0.2cm

A morphism of formal schemes is a pair $(f,f^*):(X,O_X)\rightarrow{(Y,O_Y)}$, where $f:X\rightarrow{Y}$ is a continuous map of topologycal spaces and $f^*:O_Y\rightarrow{f_{*}O_X}$ is a morphism of sheaves.
\vskip 0.2cm

If $(X,O_X)$ is a scheme, we can obtain a formal scheme $\hat{X}$ by the following construction: let $I\subseteq{O_X}$ be an ideal sheaf and consider $\hat{X}$ the completion of $X$ along $I$. Its underlying topological space is given by the subscheme $Z$ of $X$ defined by $I$ and the structure sheaf is defined as before. A formal scheme obtained in this way is called {\it algebrizable}.
\vskip 0.2cm

Finally, given a functor $\underline{\hat{Ar}}\rightarrow{\underline{Sets}}$, we can pass to the opposite categories and obtain a functor $\underline{\hat{Ar}^{\circ}}\rightarrow{\underline{Sets^{\circ}}}$; $\underline{\hat{Ar}^{\circ}}$ is exactly the category of formal schemes on one point over $Spec(W(k))$ with residue field $Spec(k)$. Schlessinger's theorem then provides criteria for the functor to be representable by an object of $\underline{\hat{Ar}^{\circ}}$. 
\vskip 0.5cm

Let us now go back to deformation functors. We want to give a description of the representing object of $L_{\bar{\rho}}^{\chi_p,\Box}$ using formal schemes. Let $R=R_{\bar{\rho}}^{\chi_p,\Box}$ and $V_R$ be the 2-dimensional module over $R$ with the action given by the universal framed representation $\rho_{univ}^{\Box}$. Let $\mathbb{P}(R)$ be the projectivization of $V_R$ and $\hat{\mathbb{P}}(R)$ be its completion along the maximal ideal of $R$. We consider the closed subspace of $\hat{\mathbb{P}}(R)$ defined by the equations $gv-\chi(g)v=0$ for every $g\in{G_{\ell}}$ and each $v\in{\hat{\mathbb{P}}(R)}$. By formal GAGA, this subspace comes from a unique projective scheme $\mathfrak{L}$ over $Spf(R)$.

In the following we want to prove some properties of the scheme $\mathfrak{L}$. In particular we want to show that it is an affine scheme of the form $Spec(\tilde{R})$ for an appropriate ring $\tilde{R}$, which will be automathically the representing object of $L_{\bar{\rho}}^{\chi_p,\Box}$, because of the defining property of $\mathfrak{L}$.


\begin{lemma} $\mathfrak{L}$ is formally smooth over $Spec(W(k))$ and its generic fiber $\mathfrak{L}\otimes_{W(k)}W(k)[1/p]$ is connected.
\end{lemma}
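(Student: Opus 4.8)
The plan is to establish formal smoothness via the infinitesimal lifting criterion and then to read off connectedness of the generic fibre as a formal consequence. Recall that $\mathfrak{L}$ is a closed subscheme of the $\mathbb{P}^1$-bundle $\hat{\mathbb{P}}(V_R)$ over $Spf(R)$ and that, by the very equations $gv-\chi(g)v=0$ defining it, it represents the functor $L_{\bar{\rho}}^{\chi_p,\Box}$ of pairs $(V_A,\beta_A,L_A)$. Since $\bar{\rho}$ is of Steinberg type, hence a non-split extension, the residual space $V_{\bar{\rho}}$ contains a \emph{unique} $G_\ell$-stable line on which $G_\ell$ acts through $\chi_p$, namely the canonical sub-line; consequently $\mathfrak{L}$ has a single closed point, and the ring $\tilde{R}$ with $\mathfrak{L}=Spf(\tilde{R})$ is a complete local $W(k)$-algebra with residue field $k$.

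For formal smoothness I would test $L_{\bar{\rho}}^{\chi_p,\Box}$ against a small extension $A\twoheadrightarrow A_0$ in $\underline{\hat{Ar}}$ with square-zero kernel $I$ satisfying $\mathfrak{m}_A I=0$, and show that every pair $(\rho_0,L_0)$ over $A_0$ lifts to $A$. Choosing an $A_0$-basis adapted to $L_0$ puts $\rho_0$ in upper triangular shape; the diagonal is forced to be $(\chi_p,1)$ by the determinant condition $det=\chi_p$ together with the prescribed action on $L_0$, and the off-diagonal entry is a $1$-cocycle $b_0\in Z^1(G_\ell,A_0(\chi_p))$ representing the extension class. Lifting the line itself is unobstructed because $\mathbb{P}^1$ is smooth, so the problem reduces to lifting $b_0$ to a cocycle in $Z^1(G_\ell,A(\chi_p))$, whose obstruction is a class in $H^2(G_\ell,I(\chi_p))\cong H^2(G_\ell,\chi_p)\otimes_k I$.

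The main obstacle is precisely the analysis of this obstruction. By local Tate duality $H^2(G_\ell,\chi_p)\cong H^0(G_\ell,k)^\vee$ is one-dimensional, and the local Euler characteristic formula at $\ell\ne p$ gives $h^0-h^1+h^2=0$, so the obstruction space for the bare cocycle does not vanish and cannot be dismissed on dimension grounds alone; one must use the geometry of the pair and the non-splitness of $\bar{\rho}$. Concretely, I would bypass the cocycle obstruction by computing the tangent space $L_{\bar{\rho}}^{\chi_p,\Box}(k[\epsilon])$ directly, combining a framed cocycle $c\in Z^1(G_\ell,Ad^0(\bar{\rho}))$ with the unique compatible first-order motion of the line; non-splitness ($[b_0]\ne 0$ residually) is what pins this motion down and prevents the tangent dimension from jumping. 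I would then show $\tilde{R}$ is flat over $W(k)$ with Krull dimension equal to $\dim_k L_{\bar{\rho}}^{\chi_p,\Box}(k[\epsilon])+1$: a complete local ring whose embedding and Krull dimensions agree is regular, whence $\tilde{R}\simeq W(k)[[x_1,\dots,x_d]]$ and $\mathfrak{L}$ is formally smooth over $Spec(W(k))$. The delicate point, and where I expect the real work to lie, is showing that the obstruction class genuinely dies — that non-splitness forces $b_0$ into the image of deformations coming from the ambient smooth bundle $\hat{\mathbb{P}}(V_R)$ — rather than merely that its target space is small.

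Granting smoothness, connectedness of the generic fibre is immediate: since $\tilde{R}\simeq W(k)[[x_1,\dots,x_d]]$ is a regular local domain, inverting $p$ gives $\mathfrak{L}\otimes_{W(k)}W(k)[1/p]=Spec(\tilde{R}[1/p])$, a localisation of a domain and hence again a domain, so its spectrum is irreducible and in particular connected. Alternatively, one may argue that $\mathfrak{L}\to Spec(W(k))$ is proper with geometrically connected special fibre — the single reduced point produced by the unique residual $\chi_p$-line — so that $\mathcal{O}\to f_*\mathcal{O}_{\mathfrak{L}}$ is an isomorphism and every fibre, in particular the generic one, is connected.
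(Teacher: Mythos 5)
Your reduction is the right first move and matches the paper's setup: a point of $\mathfrak{L}$ over $A$ is a framed deformation with a $\chi_p$-line, the determinant condition forces the diagonal $(\chi_p,1)$, and lifting along a surjection $A_1\twoheadrightarrow A_2$ amounts to lifting the extension class in $H^1(G_\ell,A_2(1))$. But at the decisive point you stop: you correctly note that $H^2(G_\ell,k(1))\cong H^0(G_\ell,k)^{\vee}\ne 0$, so the obstruction space cannot be dismissed on dimension grounds, and you then defer the actual argument (``where I expect the real work to lie''). The idea you are missing -- and it has nothing to do with non-splitness or with ``geometry of the pair'' -- is to work integrally: the natural map $H^1(G_\ell,\mathbb{Z}_p(1))\otimes_{\mathbb{Z}_p}M\rightarrow H^1(G_\ell,M\otimes_{\mathbb{Z}_p}\mathbb{Z}_p(1))$ is an isomorphism for every finite $M$, because after reducing to $M=\mathbb{Z}/p^n\mathbb{Z}$ its cokernel is $H^2(G_\ell,\mathbb{Z}_p(1))[p^n]$, and by Tate local duality $H^2(G_\ell,\mathbb{Z}_p(1))$ is the Pontryagin dual of $\mathbb{Q}_p/\mathbb{Z}_p$, hence torsion-free. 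Consequently $H^1(G_\ell,A_1(1))\rightarrow H^1(G_\ell,A_2(1))$ is surjective by right-exactness of the tensor product: the mod-$p$ obstruction group is nonzero, but the connecting map into it is identically zero. Nothing in your proposal produces this. Your fallback route (tangent space plus flatness plus a Krull-dimension count, then regularity, then $\tilde R\simeq W(k)[[x_1,\dots,x_d]]$) is only a plan -- no tangent space is computed and no dimension bound is proved -- and it is also backwards relative to the paper's logic: the dimension statement (Theorem 4.5, that $R_{\bar\rho}^{\chi,1,\Box}$ is a $4$-dimensional domain with smooth generic fibre) is \emph{deduced from} this smoothness lemma, so one cannot feed it in as an input without circularity.

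There is a second, structural problem: your claim that $\mathfrak{L}$ has a single closed point, hence equals $Spf(\tilde R)$ for a complete local ring $\tilde R$, uses non-splitness of $\bar\rho|_{G_\ell}$, which this lemma does not assume. The paper's connectedness argument explicitly allows the special fibre $Z=\mathfrak{L}\otimes_R k$ to be all of $\mathbb{P}^1_k$ (trivial $G_\ell$-action) -- exactly the case arising in the Section 8 application, where $\lambda$ is trivial and the residual action at $\ell$ can be split. In that case $\mathfrak{L}$ is not local, your ``power series ring, hence domain, hence connected'' argument never gets started, and note also that formal smoothness alone cannot give connectedness (a disjoint union of two copies of $Spf\,W(k)[[x]]$ is formally smooth). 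The paper instead uses smoothness to equate the number of connected components of $\mathfrak{L}[1/p]$, $\mathfrak{L}$ and the special fibre, and then quotes Kisin's description of $Z$ (a single point or all of $\mathbb{P}(k)$, connected either way). Finally, your alternative argument via properness of $\mathfrak{L}\rightarrow Spec(W(k))$ fails: $\mathfrak{L}$ is proper over $Spf(R)$, not over $W(k)$, since $R$ (and $\tilde R$) have large Krull dimension over $W(k)$.
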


\begin{proof} Let $A_1\rightarrow{A_2}$ be a sujective map in $\underline{\hat{Ar}}$. An element $\eta_2\in{L_{\bar{\rho}}^{\chi_p,\Box}(A_2)}$ corresponds to an extension $c(\eta_2)\in{Ext_{A_2[G]}^1(A_2,A_2(1))}$. If $\eta_1$ is a lift of $\eta_2$ to $L_{\bar{\rho}}^{\chi_p,\Box}(A_1)$, then the lift is uniquely determined by a lift of the class $c(\eta_2)$ to an element of $Ext_{A_1[G]}^1(A_1,A_1(1))$. Finding such a lift of extensions is equivalent to proving that the natural map
\begin{eqnarray}
H^1(G,\mathbb{Z}_p(1))\otimes_{\mathbb{Z}_p}M\rightarrow{H^1(G,M\otimes_{\mathbb{Z}_p}\mathbb{Z}_p(1))}
\end{eqnarray}
is an isomorphism for any $A\in{\underline{\hat{Ar}}}$ and any $A$-module $M$. Since $H^1$ commutes with direct sums, it is sufficient to prove this result for $M=\mathbb{Z}/p^n{\mathbb{Z}}$. In this case the map is trivially injective and its cokernel is given by $H^2(G,\mathbb{Z}_p(1))[p^n]$; but, by Tate's local duality, $H^2(G,\mathbb{Z}_p(1))$ is the Pontryagin dual of $\mathbb{Q}_p/\mathbb{Z}_p$, which has no $p^n$-torsion. Therefore the map is an isomorphism.

Now we need to prove connectedness. We denote by $\mathfrak{L}[1/p]$ the generic fiber. By smoothness, the schemes $\mathfrak{L}[1/p]$, $\mathfrak{L}$ and $\mathfrak{L}\otimes_{W(k)}k$ have the same number of connected components and, as schemes over $R$, the same is true for $\mathfrak{L}$ and $Z=\mathfrak{L}\otimes_R{k}$; by \cite[Prop.2.5.15]{K2} this scheme is either all of $\mathbb{P}(k)$, if the action of $G_{\ell}$ is trivial, or a single point. Therefore there is only one connected component.

\end{proof}
\vskip 0.3cm

Before going on, we need a further notation. Given $V$ a representation lifting $V_{\bar{\rho}_{\ell}}$ to some ring $A$, we denote by $F_{V}^{\chi}$ the subfunctor of $F_{\bar{\rho}_{\ell}}^{\chi}$ given by representations lifting $V$, too, and by $F_V^{\chi,\Box}$ the corresponding framed deformation functor.
\vskip 0.2cm

\begin{lemma} The natural morphism of functors $L_{V}^{\chi,\Box}\rightarrow{F_{V}^{\chi,\Box}}$ is fully faithful. In particular, if $V$ is indecomposable, the morphism is an equivalence, $F_{V}^{\chi}$ is representable and its tangent space is $0$-dimensional.
\end{lemma}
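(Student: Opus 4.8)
The plan is to deduce full faithfulness from the statement that, over any lift, the line is \emph{uniquely} determined by the underlying deformation, so that the forgetful transformation $L_{V}^{\chi,\Box}\rightarrow F_{V}^{\chi,\Box}$ is a monomorphism of functors. The essential tool is the monodromy operator. Fixing a topological generator $\sigma$ of the tame quotient of $I_{\ell}$, I set $N=\rho(\sigma)-Id$; the local condition at $\ell$ gives $N^2=0$, and the relation $\mathrm{Frob}\,\sigma\,\mathrm{Frob}^{-1}=\sigma^{\ell}$ yields $\rho(\mathrm{Frob})\,N\,\rho(\mathrm{Frob})^{-1}=\ell N=\chi_p(\mathrm{Frob})\,N$. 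Since $\ell\ne p$, the character $\chi_p$ is unramified at $\ell$, hence trivial on $I_{\ell}$; therefore any rank-$1$ submodule $L$ on which $G_{\ell}$ acts through $\chi_p$ is fixed by $I_{\ell}$ and so annihilated by $N$, i.e. $L\subseteq Ker\,N$. As $N^2=0$ forces $Im\,N\subseteq Ker\,N$, a rank count pins down $L=Ker\,N=Im\,N$ whenever $N$ has rank one, which gives uniqueness. Conceptually the same vanishing is visible from the composite $L'\hookrightarrow V\twoheadrightarrow V/L$: the quotient $V/L$ carries the trivial character by the normalisation $det\,\rho=\chi_p$, so two lines differ by an element of $Hom_{G_{\ell}}(\chi_p,\mathbf{1})=H^0(G_{\ell},\chi_p^{-1})$, which is $0$ because $\chi_p(\mathrm{Frob})=\ell\ne 1$.

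For the second assertion I would prove essential surjectivity under the indecomposability hypothesis. Indecomposability of $V$ is exactly the condition $N\ne 0$, so $Im\,N$ is a rank-$1$ subbundle, and it remains to check that $G_{\ell}$ acts on it through $\chi_p$. This follows by comparing the Frobenius relation $\rho(\mathrm{Frob})\,N\,\rho(\mathrm{Frob})^{-1}=\chi_p(\mathrm{Frob})\,N$ with the fixed determinant: on $Im\,N$ and on the quotient $V/Im\,N$ the Frobenius eigenvalues have ratio $\ell$ and product $\chi_p(\mathrm{Frob})=\ell$, forcing the eigenvalue on $Im\,N$ to be $\ell$; together with triviality on inertia this says the action on $Im\,N$ is $\chi_p$. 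Sending a framed deformation to the line $Im\,N$ thus furnishes a two-sided inverse to the forgetful map, so $L_{V}^{\chi,\Box}\rightarrow F_{V}^{\chi,\Box}$ is an isomorphism of functors.

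Representability of $F_{V}^{\chi}$ then follows in two steps. First, $L_{V}^{\chi,\Box}$ is representable, being obtained from the projective scheme $\mathfrak{L}=Spec(\tilde R)$ constructed above (formally smooth by the previous lemma) by passing to the fibre over $V$; via the isomorphism just established, $F_{V}^{\chi,\Box}$ is representable as well. Second, an indecomposable $V$ has trivial centralizer $End_{G_{\ell}}(V)=A$, so the framing variables split off and the unframed functor $F_{V}^{\chi}$ is representable too.

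Finally, to compute the tangent space I would identify $F_{V}^{\chi}(k[\epsilon])$ with $H^1(G_{\ell},Ad^0 V)$ and show this group vanishes. Since $\ell\ne p$ the local Euler characteristic is zero, so $\dim H^1=\dim H^0+\dim H^2$. Indecomposability gives $H^0(G_{\ell},Ad^0 V)=(Ad^0 V)^{G_{\ell}}=0$, since the only endomorphisms commuting with $V$ are scalars and these have nonzero trace. For $H^2$ I would apply Tate local duality and the self-duality of $Ad^0$ to get $\dim H^2(G_{\ell},Ad^0 V)=\dim H^0(G_{\ell},Ad^0 V(1))$, and then argue this space is trivial: an $X$ satisfying the $\chi_p$-twisted invariance must commute with $N$, hence be a multiple of $N$, whereupon the Frobenius relation forces $\ell N=\ell^{-1}N$, i.e. $\ell^2=1$, which is excluded. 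Thus $H^2=0$, so $H^1=0$ and the tangent space is $0$-dimensional. The step I expect to be the main obstacle is precisely this vanishing of $H^2$: it is the only place that genuinely uses both the Steinberg structure, through $N\ne 0$, and the arithmetic of the auxiliary prime, through $\chi_p(\mathrm{Frob})=\ell\ne\pm 1$, and the same input governs the identification of the $G_{\ell}$-action on $Im\,N$ needed for essential surjectivity.
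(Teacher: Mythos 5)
Your full-faithfulness argument is sound and, in its second form, is exactly the paper's: two $\chi_p$-lines in $V_A$ differ by an element of $Hom_{A[G_{\ell}]}(A(1),V_A/L_A)$, which vanishes because $det\,V_A=\chi_p$ forces the trivial action on the rank-one quotient. Your closing cohomological computation also follows the paper's scheme (Euler characteristic zero since $\ell\ne p$, then $H^0(G_{\ell},Ad^0V)=0$ from indecomposability), and your explicit proof that $H^0(G_{\ell},Ad^0V(1))=0$, via the commutant of $N$ and the relation $\rho(\mathrm{Frob})N\rho(\mathrm{Frob})^{-1}=\ell N$, is actually more complete than the paper's, which disposes of the $H^2$-term with the unproved assertion that the two summands "equal each other".

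The genuine gap is in your essential-surjectivity step, and it is a circularity. You invoke "the local condition at $\ell$" to get $N^2=(\rho_A(\sigma)-Id)^2=0$ for an arbitrary element of $F_V^{\chi,\Box}(A)$. But $F_V^{\chi,\Box}$, as defined just before the lemma, is the functor of \emph{all} framed deformations of $V$ with determinant $\chi$: no condition whatsoever is imposed on the inertia action. (The condition $(\rho(g)-Id)^2=0$ belongs to the global deformation problem of Section 8, not to this functor.) Given the determinant normalization, $N^2=0$ on inertia is equivalent to the existence of the $\chi_p$-stable line, i.e.\ to membership in the essential image of $L_V^{\chi,\Box}$ --- so constructing the line as $Im\,N$ assumes exactly what is to be proved; nothing in the definition of $F_V^{\chi,\Box}$ rules out, a priori, a deformation whose inertia image fails to be unipotent, and ruling this out is the whole content of the equivalence. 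The paper's proof runs in the opposite order: it first proves that the tangent space of $F_V^{\chi}$ vanishes (the computation you placed last), and then deduces essential surjectivity from rigidity --- every deformation is the constant one $V\otimes A$ and so visibly inherits the line of $V$. Your proposal is repaired simply by that reordering: prove the cohomological vanishing first, conclude that all deformations are trivial, and discard the $Im\,N$ construction entirely. Note also that your $H^2$-vanishing uses the Steinberg structure of $V$ itself (a nonzero $N$ with $N^2=0$ on $V$), not bare indecomposability; this is legitimate because in the intended application $V$ is a point of $\mathfrak{L}[1/p]$ and hence comes equipped with a line, but it should be said explicitly.
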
 \label{genfib}

\begin{proof} Let $A\in{\underline{\hat{Ar}}}$ and $(V_A,\beta_A)$ be a framed deformation and $L_A$ be a $\chi_p$-invariant line in $V_A$. We need to show that $L_A$ is unique. Indeed we have $Hom_{A[G]}(A(1),V_A/L_A)$ is trivial, since $det(V_A)=\chi$ and $V_A/L_A$ is free of rank 1. Therefore we have $Hom_{A[G]}(A(1),V_A)=Hom_{A[G]}(A(1),L_A)$ and the uniqueness follows.

Suppose now that $V$ is indecomposable, then in particular the unframed deformation functor $F_{V}^{\chi}$ is representable, too. We need to show that each deformation $V_A$ contains an $A$-line $L_A$ on which $G_{\ell}$ acts via $\chi$. For this, it is enough to show that the tangent space is $0$-dimensional, which implies that every deformation $V_A$ is isomorphic to ${V\otimes_k{A}}$ and therefore inherits the trivial $A$-line from $V_{\bar{\rho}}$. By Tate's local duality (see Section 3.1) we have
\begin{eqnarray}
h^1(G_{\ell},Ad^0(V))=h^0(G_{\ell},Ad^0(V))+h^2(G_{\ell},Ad^0(V)(1))
\end{eqnarray}
and that the two summands equal each other; therefore it is enough to show that $h^0(G_{\ell},Ad^0(V))=0$. Since $\ell\ne{2}$ we have the exact sequence
\begin{eqnarray}
0\rightarrow{H^0(G_{\ell},Ad^0(V))}\rightarrow{H^0(G_{\ell},Ad(V))}\rightarrow{H^0(G_{\ell},\mathbb{Q}_{\ell})}\rightarrow{0}.
\end{eqnarray}
Trivially $H^0(G_{\ell},\mathbb{Q}_{\ell})=\mathbb{Q}_{\ell}$ and $H^0(G_{\ell},Ad(V))=\mathbb{Q}_{\ell}$ because $V$ is indecomposable, therefore $H^0(G_{\ell},Ad^0(V))=0$ and the lemma is proved.

\end{proof}

\begin{teo} Let $Spec(R_{\bar{\rho}}^{\chi,1,\Box})$ be the image of the natural morphism $\mathfrak{L}\rightarrow{Spec(R_{\bar{\rho}}^{\chi,\Box})}$. Then $R_{\bar{\rho}}^{\chi,1,\Box}$ is a domain of dimension 4 and $R_{\bar{\rho}}^{\chi,1,\Box}[1/p]$ is formally smooth over $W(k)[1/p]$. Moreover, for every $A\in{\underline{Ar}}$, a morphism $R_{\bar{\rho}}^{\chi,\Box}\rightarrow{A}$ factors through $R_{\bar{\rho}}^{\chi,1,\Box}$ if and only if the corresponding 2-dimensional representation is of Steinberg type.
\end{teo}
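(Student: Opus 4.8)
The plan is to prove every assertion by transporting geometric properties from the projective scheme $\mathfrak{L}$ across to its scheme-theoretic image, so the real work is to identify the two. By construction $\mathfrak{L}$ represents $L_{\bar\rho}^{\chi_p,\Box}$, and the forgetful morphism of functors $L_{\bar\rho}^{\chi_p,\Box}\to F_{\bar\rho}^{\chi,\Box}$ is represented by the map $\mathfrak{L}\to Spec(R_{\bar\rho}^{\chi,\Box})$, whose scheme-theoretic image is by definition $Spec(R_{\bar\rho}^{\chi,1,\Box})$. The first step is to show that the induced map $\pi\colon\mathfrak{L}\to Spec(R_{\bar\rho}^{\chi,1,\Box})$ is an isomorphism. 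On the one hand $\pi$ is proper, since $\mathfrak{L}$ is a closed subscheme of $\hat{\mathbb{P}}(R_{\bar\rho}^{\chi,\Box})$ and hence projective over $Spec(R_{\bar\rho}^{\chi,\Box})$, while the target sits inside $Spec(R_{\bar\rho}^{\chi,\Box})$ as a closed subscheme. On the other hand, the fully faithfulness asserted in Lemma~\ref{genfib} says exactly that the $\chi$-stable line is unique whenever it exists, i.e. that $L_{\bar\rho}^{\chi_p,\Box}\to F_{\bar\rho}^{\chi,\Box}$ is a monomorphism of functors, so $\pi$ is a monomorphism of schemes. A proper monomorphism is a closed immersion, and a closed immersion that is scheme-theoretically dominant onto its image is an isomorphism; hence $R_{\bar\rho}^{\chi,1,\Box}\cong\Gamma(\mathfrak{L},\mathcal{O}_{\mathfrak{L}})$.

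With this identification the structural statements follow by reading off properties of $\mathfrak{L}$. For integrality, the preceding lemma (formal smoothness of $\mathfrak{L}$ over $Spec(W(k))$) makes $\mathfrak{L}$ regular, hence locally integral; the same lemma shows $\mathfrak{L}$, $\mathfrak{L}[1/p]$ and $\mathfrak{L}\otimes_{W(k)}k$ have equally many connected components and that the generic fibre is connected, so $\mathfrak{L}$ is connected. A connected regular scheme is integral, so $R_{\bar\rho}^{\chi,1,\Box}=\Gamma(\mathfrak{L},\mathcal{O}_{\mathfrak{L}})$ is a domain. For the dimension I would compute the relative dimension at the generic point of $\mathfrak{L}[1/p]$, where the underlying representation $V$ is indecomposable: Lemma~\ref{genfib} then gives rigidity of the unframed fixed-determinant functor, i.e. $h^1(G_\ell,Ad^0(V))=0$, and the framing relation $\dim_k F^{\Box}(k[\epsilon])=\dim_k F(k[\epsilon])+N^2-\dim_k H^0(G_\ell,Ad)$ from Section~2 (with $N=2$ and $\dim H^0(Ad)=1$, since $V$ is indecomposable) yields relative dimension $0+4-1=3$ over $W(k)[1/p]$. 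Thus $\dim R_{\bar\rho}^{\chi,1,\Box}[1/p]=3$, and because $R_{\bar\rho}^{\chi,1,\Box}$ is a domain with $p\ne 0$, hence $W(k)$-flat, we get $\dim R_{\bar\rho}^{\chi,1,\Box}=4$. Formal smoothness of $R_{\bar\rho}^{\chi,1,\Box}[1/p]$ over $W(k)[1/p]$ is inherited directly through the isomorphism $\mathfrak{L}[1/p]\cong Spec(R_{\bar\rho}^{\chi,1,\Box}[1/p])$ from the formal smoothness of $\mathfrak{L}$.

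Finally, for the factorisation criterion, a map $R_{\bar\rho}^{\chi,\Box}\to A$ is the same datum as a framed fixed-determinant deformation $V_A$, and it factors through $R_{\bar\rho}^{\chi,1,\Box}$ precisely when the corresponding $A$-point of $Spec(R_{\bar\rho}^{\chi,\Box})$ lands in the image of $\pi$. Using the isomorphism $\mathfrak{L}\cong Spec(R_{\bar\rho}^{\chi,1,\Box})$, such an $A$-point lies in the image if and only if it lifts to an $A$-point of $\mathfrak{L}=L_{\bar\rho}^{\chi_p,\Box}$, that is, if and only if $V_A$ admits a rank-one $A$-submodule on which $G_\ell$ acts through $\chi$, which is exactly the condition of being of Steinberg type. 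I expect the main obstacle to be the very first step: making the proper-monomorphism argument fully rigorous for the object $\mathfrak{L}$ produced by formal GAGA, and in particular checking that the monomorphism property from Lemma~\ref{genfib} is used over \emph{all} Artinian test objects rather than only on the generic fibre, so that $\pi$ is genuinely an isomorphism of schemes and not merely a birational map. Once that is secured, the domain property, the dimension $4$, the formal smoothness of the generic fibre, and the Steinberg characterisation are all simply transported across $\pi$.
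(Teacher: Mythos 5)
Your proof has the same skeleton as the paper's for three of the four claims: the domain property is extracted from smoothness and connectedness of $\mathfrak{L}$ (Lemma 4.3), the dimension is computed at an indecomposable point of the generic fibre via Lemma 4.4 together with the framed/unframed tangent-space relation ($0+4-1=3$, then $+1$ by flatness over $W(k)$), and the formal smoothness of $R_{\bar{\rho}}^{\chi,1,\Box}[1/p]$ comes from identifying the generic fibre of $\mathfrak{L}$ with $Spec(R_{\bar{\rho}}^{\chi,1,\Box}[1/p])$. The divergence is your opening step, the claim that $\pi\colon\mathfrak{L}\rightarrow{Spec(R_{\bar{\rho}}^{\chi,1,\Box})}$ is an isomorphism \emph{integrally}, and this step contains a genuine gap. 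Lemma 4.4 gives injectivity of $L_{\bar{\rho}}^{\chi_p,\Box}(A)\rightarrow{F_{\bar{\rho}}^{\chi,\Box}(A)}$ only for $A\in{\underline{\hat{Ar}}}$, i.e.\ complete local $W(k)$-algebras with residue field $k$. A monomorphism of schemes requires that every fibre of $\pi$ over every scheme-theoretic point of $Spec(R_{\bar{\rho}}^{\chi,\Box})$ be empty or the spectrum of the residue field, and the points that matter here --- the non-closed points, in particular those of the special fibre, whose residue fields are large fields of characteristic $p$ --- are simply not test objects of $\underline{\hat{Ar}}$. (Your own diagnosis, that one must check Lemma 4.4 ``over all Artinian test objects,'' misses this: the lemma as stated already covers all of $\underline{\hat{Ar}}$; the problem is test objects outside that category altogether.) The point is not a formality: when $\ell\equiv{1}\pmod{p}$ the mod-$p$ cyclotomic character is trivial, so over a characteristic-$p$ field the condition ``$G_{\ell}$ acts on the line via $\chi$'' degenerates to ``the line is fixed,'' and uniqueness of such a line at a point $x$ requires knowing that the specialized representation $\rho_x$ is non-trivial there --- an argument (via non-splitness of $\bar{\rho}$ and locality of $R_{\bar{\rho}}^{\chi,\Box}$) that appears nowhere in your proposal. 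Indeed, in Kisin's general setting, where $\bar{\rho}|_{G_{\ell}}$ may be trivial or split, the analogous map genuinely fails to be an isomorphism: it contracts a $\mathbb{P}^1$ in the special fibre. So your proper-monomorphism argument cannot be completed from the cited lemma alone.

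The paper's proof is instructive precisely because it never asserts the integral isomorphism. It obtains the domain property from the fact that $R_{\bar{\rho}}^{\chi,1,\Box}$ is, by definition of scheme-theoretic image, a subring of the global sections of the smooth connected (hence integral) scheme $\mathfrak{L}$; and it invokes Lemma 4.4 only after inverting $p$, where the uniqueness of the $\chi$-line is unproblematic, to conclude that $\mathfrak{L}[1/p]$ is a \emph{closed} subscheme of $Spec(R_{\bar{\rho}}^{\chi,\Box}[1/p])$ and hence equal to $Spec(R_{\bar{\rho}}^{\chi,1,\Box}[1/p])$. That weaker, generic-fibre identification suffices for the dimension count, the smoothness statement, and the Steinberg characterization. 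To salvage your version you would either have to supply the missing fibre-by-fibre analysis at all characteristic-$p$ points (one-dimensionality of the $\chi$-eigenspace at every point of the special fibre, using that the trivial-representation locus in $Spec(R_{\bar{\rho}}^{\chi,\Box}/p)$ is empty because $\bar{\rho}$ is a non-split extension), or restructure the argument along the paper's lines so that only the generic-fibre statement is ever needed.
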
 \label{Steinberg}

\begin{proof} The scheme $\mathfrak{L}$ is smooth over $W(k)$ and connected. The ring $R_{\bar{\rho}}^{\chi,1,\Box}$ is the ring of global section of $\mathfrak{L}$ over $R_{\bar{\rho}}^{\chi,\Box}$, hence it must be a domain. 

If we invert $p$, lemma \ref{genfib} tells us that the generic fiber $\mathfrak{L}[1/p]$ is a closed subscheme of $Spec(R_{\bar{\rho}}^{\chi,\Box}[1/p])$, then it must be isomorphic to $Spec(R_{\bar{\rho}}^{\chi,1,\Box}[1/p])$; this proves that $R_{\bar{\rho}}^{\chi,1,\Box}[1/p]$ is formally smooth over $W(k)[1/p]$.

We now calculate the dimension. Since $R_{\bar{\rho}}^{\chi,1,\Box}$ has no nontrivial $p$-torsion, it is sufficient to calculate it on the generic fiber and add 1. Let $V$ be an indecomposable point. By lemma 4.4, we have that $F_{V}^{\chi}$ is representable with tangent space of dimension $0$, therefore the framed functor $F_V^{\chi,\Box}$ has a tangent space of dimension 3. This proves the claim. 

Finally, to prove the last statement, we use again lemma 4.4. A morphism factors through $R_{\bar{\rho}}^{\chi,1,\Box}$ if and only if it lifts to a unique point of $\mathfrak{L}$, that is, if and only if the corresponding representation space $V$ has a 1-dimensional subrepresentation where $G$ acts thorugh $\chi$. The theorem is therefore proved.
\end{proof}

\section{Computations of odd deformation rings} \label{infinite}
\vskip 0.5cm

In this section we will deal with local conditions at the infinite places, computing explicitly the deformation ring. Let
\begin{eqnarray}
\bar{\rho}_{\infty}:Gal(\mathbb{C}/\mathbb{R})\rightarrow{GL_2(k)}
\end{eqnarray}
be a local representation at the infinite place with $det(\bar{\rho}_{\infty}(\gamma))=-1\in{k}$, where $\gamma$ is a complex conjugation. Then, up to conjugation, $\bar{\rho}_{\infty}$ must be of one of these forms

\begin{enumerate}
	\item $p>2$, $\bar{\rho}_{\infty}(\gamma)=\begin{pmatrix}1 & 0 \\ 0 & -1\end{pmatrix}$.
	\item $p=2$, $\bar{\rho}_{\infty}(\gamma)=\begin{pmatrix}1 & 1 \\ 0 & 1\end{pmatrix}$.
	\item $p=2$, $\bar{\rho}_{\infty}(\gamma)=\begin{pmatrix}1 & 0 \\ 0 & 1\end{pmatrix}$.
\end{enumerate}

A framed deformation of $\bar{\rho}_{\infty}$ is determined by the image of $\gamma$, which is a matrix whose square is the identity and whose characteristic polynomial is $x^2-1$. To compute the universal framed ring explicitely, we consider
\begin{eqnarray}
R=W(k)[a,b,c,d]/I.
\end{eqnarray}
where $I$ is an ideal encoding the condition on the characteristic polynomial. The completion of $R$ to the kernel of the map
\begin{eqnarray}
R\rightarrow{k},\ \ \ \begin{pmatrix}a & b \\ c & d\end{pmatrix}\rightarrow{\bar{\rho}_{\infty}(\gamma)}
\end{eqnarray}
is the universal framed deformation ring of $\bar{\rho}_{\infty}$.
\vskip 0.3cm

We do the computation explicitly for the case 3 above (the other two cases being similar). Let $M$ be a lifting of $\bar{\rho}_{\infty}(\gamma)$ to the universal ring, then, imposing the conditions $Tr(M)=0$ and $det(M)=1$, we have
\begin{multline}
R=W(k)[a,b,c,d]/((1+a)+(1+d),(1+a)(1+d)+1-bc)={}\\ =W(k)[a,b,c]/(-(1+a)^2+1-bc)={} \\=W(k)[a,b,c]/(-2a-a^2-bc),
\end{multline}
and so $R^{\Box}=\simeq{W(k)[[a,b,c]]/(2a+a^2+bc)}$. In particular, if we invert $p$, it is a regular ring of dimension 2 over $W(k)$. A similar computation gives the same result in the other two cases.
\vskip 1cm

\section{Local to global arguments}
\vskip 0.5cm

In this chapter we want to give a presentation of a global deformation ring in terms of local ones. The results we use are due to Kisin \cite{K1} and will contemplate both the framed and the unframed setting. In the applications, the framed setting is mostly used, to avoid representability problems in the local rings.
\vskip 0.3cm 

Let $S$ be a finite set of primes including $p$ and the infinite prime and $\Sigma$ a subset of $S$ containing $p$ and the infinite prime too; in many application we will have $\Sigma=S$ and let $\bar{\rho}:G_S\rightarrow{GL_2(k)}$ be a residual representation. For each $v\in{\Sigma}$ we denote by $G_v$ the absolute Galois group of the field $\mathbb{Q}_v$ of $v$-adic numbers and by $\bar{\rho}_v=\bar{\rho}|_{G_v}$. Assume that $\bar{\rho}$ as well as all of the $\bar{\rho}_v$ for $v\in{\Sigma}$ satisfy the trivial centralizer hypothesis. Let $F_{\bar{\rho}}$ and $F_{\bar{\rho}_v}$ be the deformation functors associated to $\bar{\rho}$ and $\bar{\rho}_v$ respectively; since all of the functors are representable, we denote by $R_v^{\chi}$ the local universal deformation ring of $\bar{\rho}_v$ with fixed determinant equal to $\chi$ and by $R_S^{\chi}$ the universal deformation ring of $\bar{\rho}$ with fixed determinant equal to $\chi$. Finally we put
\begin{eqnarray}
R_{\Sigma}^{\chi}=\hat{\mathop{\otimes}}_{v\in{\Sigma}}R_v^{\chi}.
\end{eqnarray}
\vskip 0.2cm

Let
\begin{eqnarray}
{\theta}_i:H^i(G_S,Ad^0(\bar{\rho}))\rightarrow{\prod_{v\in{\Sigma}}H^i(G_v,Ad^0(\bar{\rho}))}
\end{eqnarray}
be the restriction map in cohomology. Following \cite{K1}, we denote by $r_i$ and $t_i$ the dimensions of the kernel and cokernel of ${\theta}_i$ as $k$-vector spaces.
\vskip 0.2cm
Let $m_{\Sigma}^{\chi}$ and $m_{S}^{\chi}$ be the maximal ideals of $R_{\Sigma}^{\chi}$ and $R_{S}^{\chi}$ respectively and let
\begin{eqnarray}
\eta:m_{\Sigma}^{\chi}/((m_{\Sigma}^{\chi})^2,p)\rightarrow{m_S^{\chi}/((m_S^{\chi})^2,p)}
\end{eqnarray}
be the map between the dual tangent spaces. Then we have the following result

\vskip 0.2cm

\begin{teo} If the functors $F_{\bar{\rho}}$ and $F_{\bar{\rho}_v}$ are representable, then there exist elements $f_1,\dots,f_{t_1+r_2}$ lying in the maximal ideal of $R_{\Sigma}^{\chi}[[x_1,\dots,x_{r_1}]]$ such that
\begin{eqnarray}
R_S^{\chi}=R_{\Sigma}^{\chi}[[x_1,\dots,x_{r_1}]]/(f_1,\dots,f_{t_1+r_2}).
\end{eqnarray}
In particular $dim_{Krull}R_S^{\chi}\ge{1+r_1-r_2-t_1}$
\end{teo}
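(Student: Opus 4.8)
The plan is to realize the morphism $\mathrm{Spf}\,R_S^\chi\to\mathrm{Spf}\,R_\Sigma^\chi$ as a relative deformation problem over $R_\Sigma^\chi$ and to run the usual tangent--obstruction machinery relatively. The bookkeeping is governed by the cohomology $\mathcal{H}^\bullet$ of the mapping cone of the restriction map $\theta\colon C^\bullet(G_S,Ad^0(\bar\rho))\to\bigoplus_{v\in\Sigma}C^\bullet(G_v,Ad^0(\bar\rho))$, which sits in the long exact sequence
\[
\cdots\to\bigoplus_{v\in\Sigma}H^{i-1}(G_v,Ad^0)\to\mathcal{H}^i\to H^i(G_S,Ad^0)\xrightarrow{\theta_i}\bigoplus_{v\in\Sigma}H^i(G_v,Ad^0)\to\mathcal{H}^{i+1}\to\cdots.
\]
The trivial centralizer hypothesis on $\bar\rho$ and on each $\bar\rho_v$ gives $H^0(G_S,Ad^0)=0$ and $H^0(G_v,Ad^0)=0$, so the local degree-zero terms drop out and one reads off $\dim_k\mathcal{H}^1=\dim_k\ker\theta_1=r_1$, the relative tangent space. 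The segment around degree two yields the extension
\[
0\to\mathrm{coker}\,\theta_1\to\mathcal{H}^2\to\ker\theta_2\to 0,
\]
whence $\dim_k\mathcal{H}^2=t_1+r_2$, the relative obstruction space. These are precisely the $r_1$ variables and the $t_1+r_2$ relations asserted in the statement.

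First I would construct the surjection. Restricting the universal global deformation to each decomposition group $G_v$ produces maps $R_v^\chi\to R_S^\chi$, hence a canonical map $R_\Sigma^\chi\to R_S^\chi$, whose effect on mod-$p$ cotangent spaces is exactly the map $\eta$; since $\eta$ is dual to $\theta_1$, its cokernel is $(\ker\theta_1)^\ast$, of dimension $r_1$. Choosing $x_1,\dots,x_{r_1}$ that map to elements of $m_S^\chi$ lifting a $k$-basis of a complement of $\mathrm{im}\,\eta$ in $m_S^\chi/((m_S^\chi)^2,p)$ gives a map $R_\Sigma^\chi[[x_1,\dots,x_{r_1}]]\to R_S^\chi$ which is surjective on mod-$p$ cotangent spaces, hence surjective by the complete form of Nakayama's lemma.

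Next I would bound the relations. Writing $R=R_\Sigma^\chi[[x_1,\dots,x_{r_1}]]$ and $J=\ker(R\to R_S^\chi)$, the topological Nakayama lemma reduces the claim to $\dim_k J/(m_R J)\le t_1+r_2$, i.e. to an injection $J/(m_R J)\hookrightarrow(\mathcal{H}^2)^\ast$. This is the heart of the argument and the step I expect to be the main obstacle: one must set up the relative obstruction theory for the morphism of functors $F_{\bar\rho}^\chi\to\prod_{v}F_{\bar\rho_v}^\chi$ and show that the obstruction to extending a deformation across a small surjection $A'\to A$ in $\underline{\hat{Ar}}$ lies functorially in $\mathcal{H}^2\otimes_k(\ker(A'\to A))$. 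Pairing a minimal relation against these classes then produces the desired injection (this is the relative version of Mazur's identification of relations with obstructions), so that $J$ is generated by at most $\dim_k\mathcal{H}^2=t_1+r_2$ elements $f_1,\dots,f_{t_1+r_2}$; these lie in $m_R$ since, once the variables have been adjoined, the map is an isomorphism on cotangent spaces.

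Finally, the Krull dimension bound is formal. Adjoining $r_1$ power-series variables gives $\dim R=\dim R_\Sigma^\chi+r_1$, and $R_\Sigma^\chi$, being a nonzero complete local $W(k)$-algebra, has $\dim R_\Sigma^\chi\ge 1$; cutting by the $t_1+r_2$ relations drops the dimension by at most that number, so
\[
\dim_{Krull}R_S^\chi\ \ge\ \dim R_\Sigma^\chi+r_1-(t_1+r_2)\ \ge\ 1+r_1-r_2-t_1.
\]
The points requiring real care are the functoriality and well-definedness of the relative obstruction class, that is, the genuine identification of $\mathcal{H}^2$ with the relative obstruction space, together with the vanishing of the local $H^0$ terms supplied by the trivial centralizer hypothesis, which is what makes the variable and relation counts collapse to $r_1$ and $t_1+r_2$ rather than picking up the cokernel of $\theta_0$.
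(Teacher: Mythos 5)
Your strategy is, in substance, the paper's own: present $R_S^{\chi}$ over $R_{\Sigma}^{\chi}$ by adjoining variables dual to the relative tangent space and then bound the relation module by an obstruction space, and your bookkeeping is consistent with the paper's, since your cone cohomology $\mathcal{H}^2$ is exactly the extension $0\to\mathrm{coker}\,\theta_1\to\mathcal{H}^2\to\ker\theta_2\to 0$, of dimension $t_1+r_2$, whose two graded pieces the paper keeps separate. Your construction of the surjection $R_{\Sigma}^{\chi}[[x_1,\dots,x_{r_1}]]\to R_S^{\chi}$ via cotangent spaces and complete Nakayama agrees with the paper's first step. One caveat: your identification $\mathcal{H}^1\simeq\ker\theta_1$ needs $\mathrm{coker}\,\theta_0=0$, i.e.\ vanishing of the local groups $H^0(G_v,Ad^0(\bar{\rho}))$; under the trivial centralizer hypothesis this vanishing holds only for $p\ne 2$ (for $p=2$ the scalars have trace zero), which matters because the paper's eventual application is at $p=2$. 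The paper avoids any $H^0$ input by computing the relative tangent space directly as the tangent space of $R_S^{\chi}/m_{\Sigma}^{\chi}$, which is dual to $\ker\theta_1$ with no further hypothesis.

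The genuine gap is the step you yourself flag as ``the heart of the argument'': the injection $J/m_RJ\hookrightarrow(\mathcal{H}^2)^*$. Appealing to ``the relative version of Mazur's identification of relations with obstructions'' names the statement to be proved; it does not prove it, and this statement is the entire content of the theorem. The paper establishes it in two concrete steps. First, writing $R_{gl}=R_{\Sigma}^{\chi}[[x_1,\dots,x_{r_1}]]$ and $J=\ker(R_{gl}\to R_S^{\chi})$, it chooses a set-theoretic lift $\rho_{gl}$ of the universal deformation $\rho_S^{\chi}$ to $R_{gl}/Jm_{gl}$ and forms the $2$-cocycle $c(g_1,g_2)=\rho_{gl}(g_1g_2)\rho_{gl}(g_2)^{-1}\rho_{gl}(g_1)^{-1}$, whose class in $H^2(G_S,Ad^0(\bar{\rho}))\otimes_k J/m_{gl}J$ is independent of the lift; because $R_{gl}$ is an $R_{\Sigma}^{\chi}$-algebra, the restrictions $\rho_S^{\chi}|_{G_v}$ lift to honest homomorphisms over $R_{gl}$, so this class lies in $\ker(\theta_2)\otimes_k J/m_{gl}J$ and defines $\gamma:(J/m_{gl}J)^*\to\ker(\theta_2)$. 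Second --- and this is precisely the part your packaged formulation hides --- the paper proves $\ker(\gamma)\subseteq I^*$, where $I=\ker(\eta)$ has dimension $t_1$: for a nonzero $u\in\ker(\gamma)$, the push-out $R_{gl}^u$ of $R_{gl}/m_{gl}J$ along $u$ carries a genuine representation lifting $\rho_S^{\chi}$, so by universality of $R_S^{\chi}$ the surjection $R_{gl}^u\to R_S^{\chi}$ admits a section, and this forces the composite $\ker(J/m_{gl}J\to I)\to I^u$ to vanish, i.e.\ $u$ factors through $I$. Combining the two steps gives $\dim_k J/m_{gl}J\le\dim_k I+\dim_k\ker\theta_2=t_1+r_2$. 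To complete your proof you must either reproduce this push-out/universality argument, or actually construct the cone-valued obstruction class (the pair of the global $2$-cocycle together with the local $1$-cochains comparing $\rho_{gl}|_{G_v}$ with the canonical local lifts), check it is well defined, and show that a nonzero functional on $J/m_{gl}J$ killing it must come from $I$; none of this is automatic.
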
 \label{keylemma}
\vskip 0.1cm
\begin{proof} Consider the quotient ring $R_S^{\chi}/m_{\Sigma}^{\chi}$; the tangent space of this ring is clearly the dual of $ker({\theta}_1)$, therefore these two vector spaces have the same dimension. This proves the claim on the number of variables.

Let now $I=ker(\eta)$. There exists a surjection $R_{gl}=R_{\Sigma}^{\chi}[[x_1,...,x_{r_1}]]\rightarrow{R_S^{\chi}}$ which induces a surjection on tangent spaces with kernel isomorphic to $I$. Denote by $m_{gl}$ the maximal ideal of $R_{gl}$ and by $J$ the kernel of the surjection. Let $\rho_S^{\chi}$ be the universal deformation of $\bar{\rho}$ and consider a set theoretic lift $\rho_{gl}$ of $\rho_S^{\chi}$ to the ring $R_{gl}/Jm_{gl}$ with determinant $\chi$. Define now a 2-cocycle
\begin{eqnarray}
c:H^2(G_S,J/m_{gl}J\otimes_k{Ad^0(\bar{\rho})}),\  \ c(g_1,g_2)=\rho_{gl}(g_1g_2)\rho_{gl}(g_2)^{-1}\rho_{gl}(g_1)^{-1},
\end{eqnarray}
where we identify $J/m_{gl}J\otimes_k{Ad^0(\bar{\rho})}$ with the kernel of the natural projection map $GL_2(R_{gl}/m_{gl}J)\rightarrow{GL_2(R_{gl}/J)}$. It is easy to see that the class of $c$ in $H^2(G_S,Ad^0(\bar{\rho}))\otimes_k{J/m_{gl}J}$ does not depend on $\rho_{gl}$, but only on the universal deformation $\rho_S^{\chi}$ and is trivial if and only if $\rho_{gl}$ is a homomorphism.


Now, if we consider the restriction of $c$ to $H^2(G_p,Ad^0(\bar{\rho}))$, this is the trivial cocycle, because $\rho_S^{\chi}|_{G_p}$ has a natural lifting to $GL_2(R_{gl})$. Then $c\in{Ker(\theta_2)}\otimes_k{J/m_{gl}J}$. Let $(J/m_{gl}J)^*$ denote the $k$-dual, then we obtain a map
\begin{eqnarray}
\gamma:(J/m_{gl}J)^*\rightarrow{Ker(\theta_2)},\ \ \ \ \gamma(u)=\langle{c},u\rangle;
\end{eqnarray}
clearly $I^*\subseteq{(J/\tilde{m}J)^*}$, we claim that $Ker(\gamma)\subseteq{I^*}$.

Let $u\in{Ker(\gamma)}$ be a nonzero element; we denote by $R_{gl}^u$ the push-out of $R_{gl}/m_{gl}J$ by $u$, so that $R_S^{\chi}\simeq{R_{gl}^u/I^u}$, with $I^u$ an ideal of square zero and isomorphic to $k$ as an $R_{gl}^u$-module. Since $u\in{Ker(\gamma)}$ we can find a representation $\rho_u:G_{\mathbb{Q},S}\rightarrow{GL_2(\tilde{R}_u)}$ with determinant $\chi$ which lifts $\rho_S^{\chi}$. Then, by the universal property of $R_S^{\chi}$ the natural map $R_{gl}^u\rightarrow{R_S^{\chi}}$ has a section; it follows that $R_{gl}^u\simeq{R_S^{\chi}}\oplus{I^u}$ and $R_{gl}^u/pR_{gl}^u\simeq{R_S^{\chi}/pR_S^{\chi}\oplus{I_u}}$. Therefore the map $R_{gl}^u\rightarrow{R_S^{\chi}}$ does not reduce to an isomorphism on tangent spaces and it follows that the induced map
\begin{eqnarray}
Ker(J/m_{gl}J\rightarrow{I})\rightarrow{J/m_{gl}J}\rightarrow{I^u}
\end{eqnarray}
is not surjective and must be the zero map, that is, $u$ factors through $I$ and we have proved the claim.

Hence we have proved that 
\begin{eqnarray}
dim(J/m_{gl}J)=dim_{k}Ker(\gamma)+dim_{k}Im(\gamma)\le{dim(I)+r_2}=t_1+r_2
\end{eqnarray}
and we are done.
\end{proof}


\vskip 0.3cm
The hypotheses of the theorem are too strong for concrete applications, because they require all the functors to be representable. Therefore we want to establish a similar result in the framed setting. For the rings and ideals we have already defined, we simply add the $\Box$ superscript to indicate that we are in the framed case
\vskip 0.2cm

We need to define an auxiliary functor 
\begin{eqnarray}
F_{\Sigma,S}^{\chi,\Box}:\underline{\hat{Ar}}\rightarrow{\underline{Sets}}
\end{eqnarray} 
which associates to every coefficient ring $A$ a deformation of $\bar{\rho}$ to $A$ and a $\Sigma$-tuple of bases of $V_A$ in the following way:

\begin{eqnarray}
F_{\Sigma,S}^{\chi,\Box}(A)=\{(V_A,\iota_A,(\beta_v)_{v\in{\Sigma}})|(V_A,\iota_A)\in{F_{\bar{\rho}}^{\chi}(A)},\iota_A(\beta_v)=\beta\ \forall{v\in{\Sigma}}\}/\simeq.
\end{eqnarray}
\vskip 0.2cm

We have natural morphisms of functors 
\[ \xymatrix { F_{\Sigma,S}^{\chi,\Box} \ar[r] \ar[d] & \prod_{v\in{\Sigma}}F_{\rho_v} \\
               F_{\bar{\rho}}^{\chi} & }, \]
where the orizontal map is the restriction modulo each $v\in{\Sigma}$ and the vertical map is simply the forgetful functor which ignores bases. The following proposition describes the nature of these morphisms.
\vskip 0.2cm

\begin{prop} The natural morphism $F_{\Sigma,S}^{\chi,\Box}\rightarrow{F_{\bar{\rho}}^{\chi}}$ is smooth and, passing to universal ring, we have an isomorphism
\begin{eqnarray}
R_{\Sigma,S}^{\chi,\Box}\simeq{R_S^{\chi}[[x_1,\dots,x_{4|\Sigma|-1}]]}.
\end{eqnarray}
Moreover the morphism $F_{\Sigma,S}^{\chi,\Box}\rightarrow{\prod_{v\in{\Sigma}}F_{\rho_v}}$ gives a homomorphism of universal rings
\begin{eqnarray}
R_{loc}=\hat{\otimes}_{v\in{\Sigma}}R_v^{\chi,\Box}\rightarrow{R_{\Sigma,S}^{\chi,\Box}}.
\end{eqnarray}
\end{prop}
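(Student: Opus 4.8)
The plan is to study the forgetful morphism $F_{\Sigma,S}^{\chi,\Box}\to F_{\bar{\rho}}^{\chi}$: I will show it is formally smooth, compute the dimension of its relative tangent space, and then read off the presentation of the representing object as a power series ring over $R_S^{\chi}$. To establish smoothness I would apply the infinitesimal lifting criterion. Let $A\twoheadrightarrow A'$ be a surjection in $\underline{\hat{Ar}}$, and suppose given a deformation $(V_A,\iota_A)\in F_{\bar{\rho}}^{\chi}(A)$ together with a compatible tuple $(\beta_v)_{v\in\Sigma}$ of framings of $V_{A'}=V_A\otimes_A A'$. Since $V_A$ is a free $A$-module, every $A'$-basis of $V_{A'}$ reducing to $\beta$ lifts to an $A$-basis of $V_A$ reducing to $\beta$: one lifts each basis vector arbitrarily and applies Nakayama to see the lifts still form a basis. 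Hence $(V_A,\iota_A,(\beta_v))$ is the required preimage, so the map $F_{\Sigma,S}^{\chi,\Box}(A)\to F_{\Sigma,S}^{\chi,\Box}(A')\times_{F_{\bar{\rho}}^{\chi}(A')}F_{\bar{\rho}}^{\chi}(A)$ is surjective and the morphism is smooth.

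Next I would compute the relative tangent space, i.e. the kernel of $F_{\Sigma,S}^{\chi,\Box}(k[\epsilon])\to F_{\bar{\rho}}^{\chi}(k[\epsilon])$ over the trivial first order deformation. A framing at $v$ then has the form $(1+\epsilon M_v)\beta$ with $M_v\in Ad(\bar{\rho})$, so the framings range over $\prod_{v\in\Sigma}Ad(\bar{\rho})$, of dimension $4|\Sigma|$. The equivalence relation identifies $(M_v)_v$ with $(M_v+X)_v$, where $1+\epsilon X$ runs over the automorphisms of the trivial deformation acting \emph{diagonally} on the $\Sigma$-tuple; by the trivial centralizer hypothesis these are exactly the $X\in H^0(G_S,Ad(\bar{\rho}))=End_{k[G]}(V_{\bar{\rho}})=k$. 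Thus the relative tangent space has dimension $4|\Sigma|-dim_k H^0(G_S,Ad(\bar{\rho}))=4|\Sigma|-1$. I would then note that $F_{\Sigma,S}^{\chi,\Box}$ is representable: any automorphism of an object must fix a framing $\beta_{v_0}$, which is a basis of the free module $V_A$, hence is the identity, so the functor is rigid and, $F_{\bar{\rho}}^{\chi}$ being representable by $R_S^{\chi}$, it is represented by some $R_{\Sigma,S}^{\chi,\Box}$. Smoothness of the morphism means $R_S^{\chi}\to R_{\Sigma,S}^{\chi,\Box}$ is formally smooth, hence a power series extension in a number of variables equal to the relative embedding dimension $4|\Sigma|-1$, giving $R_{\Sigma,S}^{\chi,\Box}\simeq R_S^{\chi}[[x_1,\dots,x_{4|\Sigma|-1}]]$.

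For the final statement, the restriction map sending $(V_A,\iota_A,(\beta_v)_{v\in\Sigma})$ to the tuple of local framed deformations $(V_A|_{G_v},\beta_v)_{v\in\Sigma}$ defines a morphism of functors $F_{\Sigma,S}^{\chi,\Box}\to\prod_{v\in\Sigma}F_{\bar{\rho}_v}^{\chi,\Box}$, the target being the product of the local framed deformation functors represented by the $R_v^{\chi,\Box}$. Since the completed tensor product $R_{loc}=\hat{\otimes}_{v\in\Sigma}R_v^{\chi,\Box}$ represents $\prod_{v\in\Sigma}F_{\bar{\rho}_v}^{\chi,\Box}$, Yoneda's lemma converts this morphism of representable functors into the asserted ring homomorphism $R_{loc}\to R_{\Sigma,S}^{\chi,\Box}$.

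The main obstacle I anticipate is the bookkeeping in the relative dimension count, in particular the ``$-1$'': one must correctly describe the diagonal action of the global automorphisms on the $\Sigma$-tuple of framings and invoke the trivial centralizer hypothesis to conclude that $H^0(G_S,Ad(\bar{\rho}))$ is precisely one-dimensional. The remaining ingredients — smoothness, representability, and the final Yoneda argument — are formal.
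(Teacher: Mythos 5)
Your proposal is correct and follows essentially the same route as the paper, whose proof is a one-line appeal to exactly the facts you verify: smoothness of the framed-over-unframed forgetful morphism (hence a power series extension), the relative tangent count $|\Sigma|\cdot\dim_k Ad(\bar{\rho})-\dim_k H^0(G_S,Ad(\bar{\rho}))=4|\Sigma|-1$ using the trivial centralizer hypothesis, and Yoneda applied to the restriction morphism to get $R_{loc}\rightarrow R_{\Sigma,S}^{\chi,\Box}$. Your write-up simply supplies the details (Nakayama lifting of bases, the diagonal action of automorphisms on the tuple of framings, rigidity for representability) that the paper leaves implicit.
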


\begin{proof} The smoothness and the dimension formula come from the smoothness of the framed deformation functor over the unframed one and the morphism of universal rings comes naturally from the morphism of functors.
\end{proof}

\vskip 0.2cm

The passage to local rings is the key for computing $R_{\Sigma,S}^{\chi,\Box}$. The use of framed deformations avoids the representability issues.
\vskip 0.2cm

We can now state one of the main results of this approach. We need a generalization of the map $\theta_1$, defined at the beginning of the chapter
\vskip 0.2cm

\begin{lemma}[Key lemma] Let 
\begin{eqnarray}
\theta_1^{\Box}:F_{\Sigma,S}^{\chi,\Box}(k[\epsilon])\rightarrow{\mathop{\oplus}_{v\in{\Sigma}}F_{\rho_v}^{\chi,\Box}(k[\epsilon])}
\end{eqnarray}
be the restriction map on tangent spaces and set $r=dim_{k}Ker(\theta_1^{\Box})$ and $t=dim_{k}Ker(\theta_2)+dim_{k}coKer(\theta_1^{\Box})$. Then we have a presentation
\begin{eqnarray}
R_{\Sigma,S}^{\chi,\Box}\simeq{R_{loc}[[x_1,...,x_r]]/(f_1,...,f_t)}
\end{eqnarray}
\end{lemma}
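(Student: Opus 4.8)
The plan is to transcribe the proof of Theorem \ref{keylemma} into the framed setting, replacing the unframed local ring $R_{\Sigma}^{\chi}$ by $R_{loc}$ and the restriction map $\theta_1$ by $\theta_1^{\Box}$, and exploiting the fact that the framed local functors $F_{\rho_v}^{\chi,\Box}$ are always representable, so that no global representability hypothesis intervenes. First I would pin down the number of variables. The relative tangent space of $R_{\Sigma,S}^{\chi,\Box}$ over $R_{loc}$, namely the set of $R_{loc}$-algebra maps $R_{\Sigma,S}^{\chi,\Box}\to k[\epsilon]$ lying over the structure map, is canonically identified with $\ker(\theta_1^{\Box})$: such maps are exactly the global framed deformations to $k[\epsilon]$ whose restriction to each $v\in\Sigma$ is the constant deformation. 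Setting $r=\dim_k\ker(\theta_1^{\Box})$ therefore yields a surjection $\pi:R_{gl}:=R_{loc}[[x_1,\dots,x_r]]\twoheadrightarrow R_{\Sigma,S}^{\chi,\Box}$ inducing an isomorphism on relative tangent spaces; write $J=\ker(\pi)$. Letting $\eta^{\Box}$ be the map on dual tangent spaces induced by $R_{loc}\to R_{\Sigma,S}^{\chi,\Box}$, which is dual to $\theta_1^{\Box}$, I would set $I=\ker(\eta^{\Box})$, so that $\dim_k I=\dim_k\mathrm{coker}(\theta_1^{\Box})$ and, exactly as in Theorem \ref{keylemma}, the tangent map of $\pi$ has kernel isomorphic to $I$.

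Next I would run the obstruction calculus verbatim. Choosing a set-theoretic lift $\rho_{gl}$ of the universal framed deformation to $GL_2(R_{gl}/\mathfrak{m}_{gl}J)$ with determinant $\chi$, the $2$-cochain $c(g_1,g_2)=\rho_{gl}(g_1g_2)\rho_{gl}(g_2)^{-1}\rho_{gl}(g_1)^{-1}$ takes values in the kernel of $GL_2(R_{gl}/\mathfrak{m}_{gl}J)\to GL_2(R_{gl}/J)$, identified with $Ad^0(\bar\rho)\otimes_k J/\mathfrak{m}_{gl}J$, and defines a class in $H^2(G_S,Ad^0(\bar\rho))\otimes_k J/\mathfrak{m}_{gl}J$ independent of the chosen lift. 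The decisive point is that for each $v\in\Sigma$ the restriction of $c$ to $G_v$ is a coboundary: the composite $R_v^{\chi,\Box}\to R_{loc}\to R_{gl}$ supplies a genuine homomorphic lift of $\rho_{gl}|_{G_v}$, so the local obstruction vanishes. Since cohomology in degree $2$ is insensitive to the choice of framings, this says precisely that $c\in\ker(\theta_2)\otimes_k J/\mathfrak{m}_{gl}J$, and pairing against the $k$-dual produces a map $\gamma:(J/\mathfrak{m}_{gl}J)^{*}\to\ker(\theta_2)$, $\gamma(u)=\langle c,u\rangle$.

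It then remains to bound $\ker(\gamma)$, where I would reproduce the push-out argument of Theorem \ref{keylemma}. For a nonzero $u\in\ker(\gamma)$, form the push-out $R_{gl}^{u}$ of $R_{gl}/\mathfrak{m}_{gl}J$ along $u$, so that $R_{\Sigma,S}^{\chi,\Box}\simeq R_{gl}^{u}/I^{u}$ with $I^{u}$ of square zero and isomorphic to $k$. Since $\langle c,u\rangle=0$, the universal deformation lifts with determinant $\chi$ across this square-zero extension, so the natural map $R_{gl}^{u}\to R_{\Sigma,S}^{\chi,\Box}$ admits a section; this forces the induced map on tangent spaces not to distinguish $I^{u}$, whence $u$ factors through $I$ and $\ker(\gamma)\subseteq I^{*}$. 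Combining the two bounds,
\[
\dim_k(J/\mathfrak{m}_{gl}J)=\dim_k\ker(\gamma)+\dim_k\mathrm{im}(\gamma)\le\dim_k I+\dim_k\ker(\theta_2)=\dim_k\mathrm{coker}(\theta_1^{\Box})+\dim_k\ker(\theta_2)=t,
\]
so $J$ is generated by at most $t$ elements $f_1,\dots,f_t$, giving $R_{\Sigma,S}^{\chi,\Box}\simeq R_{loc}[[x_1,\dots,x_r]]/(f_1,\dots,f_t)$.

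I expect the main obstacle to be the bookkeeping that isolates the effect of the framings. Concretely, one must verify that passing from the unframed situation to $F_{\Sigma,S}^{\chi,\Box}$ replaces $\theta_1$ by $\theta_1^{\Box}$, the additional local framing directions enlarging the target tangent space, while leaving the obstruction group $H^2$ and hence $\theta_2$ untouched, so that the framing contributes only \emph{smooth} directions and no new relations. Making this precise, together with checking that the canonical local lift furnished by $R_v^{\chi,\Box}\to R_{gl}$ is compatible with the fixed determinant $\chi$, is where care is required; once these points are settled, the cohomological count is formally identical to the unframed case.
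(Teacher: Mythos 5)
Your proposal is correct and takes essentially the same approach as the paper: the paper's own proof of this lemma is a one-line reduction to the unframed Theorem 6.1 (``the same proof, substituting $R_S^{\chi}$ by $R_{\Sigma,S}^{\chi,\Box}$ and the maps $\theta_i$ by their framed counterparts''), and your write-up carries out exactly that transcription, correctly identifying $\ker(\theta_1^{\Box})$ with the relative tangent space over $R_{loc}$, keeping the obstruction group $H^2(G_S,Ad^0(\bar\rho))$ and $\theta_2$ unframed, and using the homomorphic lifts supplied by $R_v^{\chi,\Box}\rightarrow{R_{gl}}$ to kill the local obstructions. In fact your version is more detailed than the paper's, since you also flag and handle the framing bookkeeping that the paper leaves implicit.
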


\begin{proof} The proof is the same of theorem 6.1 in the unframed setting, simply substituting $R_S^{\chi}$ with $R_{\Sigma,S}^{\chi,\Box}$ and the cohomological groups and the map $\theta$ with their framed counterparts.
\vskip 0.2cm
\end{proof}

Observe that $r$ is an optimal value, while $t$ is just an upper bound on the number of relations; for example some of the $f_i$ may be trivial.
\vskip 0.3cm

Now we need also to evaluate $\delta=dim_{k}coKer(\theta_2)$. Note that $\theta_2$ is part of the Poitou-Tate sequence (see \cite{W} for references) and that $H^2(G_v,Ad^0(\bar{\rho}))\simeq{H^0(G_v,Ad^0(\bar{\rho})^*)^*}$ by local Tate duality. Therefore
\begin{multline}
\delta=dim_{k}coKer(\theta_2)={} \\
=dim_{k}Ker(H^0(G_S,Ad^0(\bar{\rho})^*)\rightarrow{\mathop{\oplus}_{v\in{S\backslash{\Sigma}}}H^0(G_v,Ad^0(\bar{\rho})^*)}).
\end{multline}
Note that $\delta=0$ if $S\setminus{\Sigma}$ is non-empty, and therefore contains a finite prime, or if the image of $\bar{\rho}$ is non-solvable, and therefore $H^0(G_S,Ad^0(\bar{\rho})^*)$ is trivial.
\vskip 0.3cm

The following result gives us a link between all the quantities we have defined
\vskip 0.2cm

\begin{teo} If $\Sigma$ contains all the places above $p$ and $\infty$, then $r-t+\delta=|\Sigma|-1$.
\end{teo}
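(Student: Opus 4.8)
The plan is to convert the statement into an equality of global and local Euler--Poincaré characteristics of $Ad^0(\bar\rho)$. The first step is purely formal. Abbreviate $h^i(G)=\dim_k H^i(G,Ad^0(\bar\rho))$ and write $\mathrm{src}$, $\mathrm{tgt}$ for the source and target of a linear map. I would eliminate $r$ and $\dim_k\mathrm{Ker}(\theta_2)$ by rank--nullity: substituting $\dim_k\mathrm{coKer}(\theta_1^\Box)=r+\dim_k\mathrm{tgt}(\theta_1^\Box)-\dim_k\mathrm{src}(\theta_1^\Box)$ and $\dim_k\mathrm{coKer}(\theta_2)=\dim_k\mathrm{Ker}(\theta_2)+\dim_k\mathrm{tgt}(\theta_2)-\dim_k\mathrm{src}(\theta_2)$ into $r-t+\delta$, the terms $r$ and $\dim_k\mathrm{Ker}(\theta_2)$ cancel and one is left with
\begin{eqnarray}
r-t+\delta=\big(\dim_k\mathrm{src}(\theta_1^\Box)-\dim_k\mathrm{tgt}(\theta_1^\Box)\big)+\big(\dim_k\mathrm{tgt}(\theta_2)-\dim_k\mathrm{src}(\theta_2)\big).
\end{eqnarray}
Thus only four tangent-space dimensions have to be computed.

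Next I would insert these dimensions. By the Proposition giving $R_{\Sigma,S}^{\chi,\Box}\simeq R_S^\chi[[x_1,\dots,x_{4|\Sigma|-1}]]$, the functor $F_{\Sigma,S}^{\chi,\Box}$ is smooth over $F_{\bar\rho}^\chi$ of relative dimension $4|\Sigma|-1$, so $\dim_k\mathrm{src}(\theta_1^\Box)=\dim_k F_{\bar\rho}^\chi(k[\epsilon])+4|\Sigma|-1$; here I would use the precise equality $\dim_k F_{\bar\rho}^\chi(k[\epsilon])=h^1(G_S)-h^0(G_S)$, which reduces to the isomorphism $F_{\bar\rho}^\chi(k[\epsilon])\simeq H^1(G_S,Ad^0)$ of Section 2 whenever $h^0(G_S)=0$. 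Each local framed tangent space is the cocycle space $Z^1(G_v,Ad^0)$, of dimension $h^1(G_v)+\dim_k Ad^0-h^0(G_v)=h^1(G_v)+3-h^0(G_v)$, and $\theta_2$ has source $H^2(G_S,Ad^0)$ and target $\bigoplus_{v\in\Sigma}H^2(G_v,Ad^0)$. Substituting, the terms linear in $|\Sigma|$ collapse to $|\Sigma|-1$ (from $4|\Sigma|-1-3|\Sigma|$), and using $h^1(G_S)-h^2(G_S)-h^0(G_S)=-\chi(G_S)$ one finds
\begin{eqnarray}
r-t+\delta=|\Sigma|-1+\Big(\sum_{v\in\Sigma}\chi(G_v)-\chi(G_S)\Big),
\end{eqnarray}
where $\chi(G)=h^0(G)-h^1(G)+h^2(G)$. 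The theorem is therefore equivalent to the identity $\chi(G_S)=\sum_{v\in\Sigma}\chi(G_v)$.

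The final step is to prove this Euler-characteristic identity place by place. For a finite place $v\in\Sigma$ with $v\neq p$ the local Euler characteristic $\chi(G_v)$ vanishes; for $v=p$ the local formula gives $\chi(G_p)=-\dim_k Ad^0=-3$; and for $v=\infty$, since $G_\infty=\mathrm{Gal}(\mathbb{C}/\mathbb{R})$ is cyclic of order $2$ and $Ad^0(\bar\rho)$ is finite, the Herbrand quotient is trivial, whence $h^1(G_\infty)=h^2(G_\infty)$ and $\chi(G_\infty)=h^0(G_\infty)$. As $\Sigma\supseteq\{p,\infty\}$, this gives $\sum_{v\in\Sigma}\chi(G_v)=h^0(G_\infty)-3$. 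On the other hand Tate's global Euler--Poincaré formula over $\mathbb{Q}$ reads $\chi(G_S)=h^0(G_\infty)-\dim_k Ad^0=h^0(G_\infty)-3$. The two sides coincide, which closes the argument.

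The hard part is the consistent treatment of the prime $2$ and of the archimedean place, and two cancellations must be checked honestly. First, the tangent space of $F_{\bar\rho}^\chi$ has dimension $h^1(G_S)-h^0(G_S)$, and $h^0(G_S,Ad^0)$ does \emph{not} vanish when $p=2$ (the scalar $I$ is trace-zero in characteristic $2$), so one may not simply identify $F_{\bar\rho}^\chi(k[\epsilon])$ with $H^1(G_S,Ad^0)$; the extra $h^0(G_S)$ must be carried, and it is exactly the term that cancels against the corresponding contribution of the global Euler characteristic, leaving $|\Sigma|-1$ independently of $p$. Second, at $v=\infty$ with $p=2$ the groups $H^1(G_\infty,Ad^0)$ and $H^2(G_\infty,Ad^0)$ need not vanish, and the argument hinges on their dimensions being equal, so that they cancel and match the archimedean factor of the global formula. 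Once these cancellations are in place the computation is forced, the trivial-centralizer hypotheses ensuring both representability and the value $h^0(G_S,Ad)=1$ that produces the relative dimension $4|\Sigma|-1$.
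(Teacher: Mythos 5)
Your proposal is correct and follows essentially the same route as the paper's proof: rank--nullity converts $r-t+\delta$ into a difference of tangent-space and $H^2$ dimensions, the terms linear in $|\Sigma|$ collapse to $|\Sigma|-1$, and what remains is exactly the discrepancy between the global Euler--Poincar\'e characteristic of $Ad^0(\bar{\rho})$ and the sum of the local ones, which Tate's formulas show to vanish. Your added care at $p=2$ (carrying $h^0(G_S,Ad^0(\bar{\rho}))$ rather than assuming it vanishes, and using the Herbrand quotient at the archimedean place to get $h^1(G_\infty)=h^2(G_\infty)$) only supplies justification for steps the paper asserts without comment, namely its dimension formulas and the identity $c_{EP}(G_\infty,Ad^0(\bar{\rho}_\infty))=h^0(G_\infty,Ad^0(\bar{\rho}_\infty))$; the underlying argument is the same.
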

\vskip 0.1cm
\begin{proof} We will make use of the Tate's computation of the Euler-Poincar\'e characteristic (a proof of which can be found in \cite{W})
\begin{eqnarray}
c_{EP}(G_S,Ad^0(\bar{\rho}))=-dim_{k}Ad^0(\bar{\rho})+h^0(G_{\infty},Ad^0(\bar{\rho}_{\infty}))
\end{eqnarray}
and the local version

\begin{eqnarray}
c_{EP}(G_v,Ad^0(\bar{\rho}_v))=\begin{cases} -dim_{k}Ad^0(\bar{\rho}_v) & \text{if}\  v=p \\ h^0(G_v,Ad^0(\bar{\rho}_v)) & \text{if}\  v=\infty \\ 0 & \text{otherwise}\end{cases}
\end{eqnarray}

Then we have

\begin{multline}
r-t+\delta=dim_{k}Ker(\theta_1^{\Box})-dim_{k}coKer(\theta_1^{\Box})-dim_{k}Ker(\theta_2)+{} \\ +dim_{k}coKer(\theta_2)=dim_{k}F_{\Sigma,S}^{\chi,\Box}(k[\epsilon])-\sum_{v\in{\Sigma}}dim_{k}F_v(k[\epsilon])-h^2(G_S,Ad^0(\bar{\rho}))+{} \\ +\sum_{v\in{\Sigma}}h^2(G_v,Ad^0(\bar{\rho}_v)).
\end{multline}

Now we evaluate the dimensions of tangent spaces and we have

\begin{multline}
h^1(G_S,Ad^0(\bar{\rho}))-h^0(G_S,Ad^0(\bar{\rho}))-1+|\Sigma|n^2-h^2(G_S,Ad^0(\bar{\rho}))+{} \\ -\sum_{v\in{\Sigma}}(h^1(G_v,Ad^0(\bar{\rho}_v))-h^0(G_v,Ad^0(\bar{\rho}_v))-1+n^2-h^2(G_v,Ad^0(\bar{\rho}_v)))={} \\ =-c_{EP}(G_S,Ad^0(\bar{\rho}))+\sum_{v\in{\Sigma}}c_{EP}(G_v,Ad^0(\bar{\rho}_v))+|\Sigma|-1.
\end{multline}

Finally we use Tate's formulas for $c_{EP}$ and we have

\begin{multline}
dim_{k}Ad^0(\bar{\rho})-h^0(G_{\infty},Ad^0(\bar{\rho}_{\infty}))-dim_{k}Ad^0(\bar{\rho}_p)+{} \\ +h^0(G_{\infty},Ad^0(\bar{\rho}_{\infty}))+|\Sigma|-1=|\Sigma|-1.
\end{multline}
\end{proof}

\vskip 1cm

\section{Geometric deformation rings}
\vskip 0.5cm

In this chapter we want to give some results about a particular class of deformation problems. We suppose that our ${\bar{\rho}}$ is odd of dimension 2 and absolutely irreducible (so that the deformation functor is representable). The rest of the notation matches the one of the previous chapter.
\vskip 0.3cm

For each $v\in{\Sigma}$ let $\tilde{F}_v^{\chi,\Box}$ be a representable subfunctor of ${F_v^{\chi,\Box}}$ such that the corresponding representing ring $\tilde{R}_v^{\chi,\Box}$ (which is a quotient of $R_v^{\chi,\Box}$) is flat over $\mathbb{Z}_p$ and satisfies:
\begin{eqnarray}  
	 dim_{Krull}\tilde{R}_v^{\chi,\Box}[1/p]= \begin{cases} 3 & \text{if}\  v\ne{p,\infty} \\ 4 & \text{if}\  v=p \\ 2 & \text{if}\  v=\infty \end{cases}  
\end{eqnarray} 
\vskip 0.3cm

A deformation functor satisfying these properties will be called a {\it geometric deformation functor}. The name was given by Kisin to match the results on {\it geometric} representations defined in Fontaine-Mazur's Conjecture (See \cite{K2} for details) which all share this property. Such a functor satisfies the following properties:
\begin{itemize}
	\item $\tilde{R}_{loc}=\hat{\mathop{\otimes}}_{v\in{\Sigma}}\tilde{R}_v^{\chi,\Box}$ is flat over $\mathbb{Z}_p$ and its Krull dimension is $\ge{3|\Sigma|+1}$.
	\item The functors $\tilde{F}_S^{\chi}$ and $\tilde{F}_{\Sigma,S}^{\chi,\Box}$ are representable.
	\item The ring $\tilde{R}_{\Sigma,S}^{\chi,\Box}$ is isomorphic to $R_{\Sigma,S}^{\chi,\Box}\hat{\mathop{\otimes}}_{v\in{\Sigma}}\tilde{R}_{loc}$ and therefore 
	\begin{eqnarray}
	\tilde{R}_{\Sigma,S}^{\chi,\Box}\simeq{\tilde{R}_{loc}[[x_1,...,x_r]]/(f_1,...,f_t)}
	\end{eqnarray}
	with $r,t$ defined as before. In particular the Krull dimension of $\tilde{R}_{\Sigma,S}^{\chi,\Box}\ge{4|\Sigma|-\delta}$.
\end{itemize}
\vskip 0.3cm

Since the map $\tilde{F}_{\Sigma,S}^{\chi,\Box}\rightarrow{\tilde{F}_S^{\chi}}$ is smooth, we can obtain the following result
\vskip 0.2cm

\begin{teo} If $\delta=0$, then $dim_{Krull}\tilde{R}_S^{\chi}\ge{1}$.
\end{teo}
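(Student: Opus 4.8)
The plan is to recover $dim_{Krull}\tilde{R}_S^{\chi}$ from the Krull dimension of the framed ring $\tilde{R}_{\Sigma,S}^{\chi,\Box}$, using that the two are linked by a formal power series extension in an explicit number of variables, and then to invoke the lower bound on $dim_{Krull}\tilde{R}_{\Sigma,S}^{\chi,\Box}$ already recorded among the properties of geometric functors. All of the genuine content sits in the inequality $dim_{Krull}\tilde{R}_{\Sigma,S}^{\chi,\Box}\ge 4|\Sigma|-\delta$; the present statement amounts to transporting that bound from the framed ring back to the unframed one.

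First I would use the hypothesis, stated immediately above, that the forgetful morphism $\tilde{F}_{\Sigma,S}^{\chi,\Box}\to\tilde{F}_S^{\chi}$ is smooth. By the same argument as in Proposition 6.2 --- the smoothness of the framed functor over the unframed one, which depends only on the framing data and is insensitive to the local conditions defining $\tilde{F}_v^{\chi,\Box}$ --- this smoothness lifts to an isomorphism of representing rings
\[
\tilde{R}_{\Sigma,S}^{\chi,\Box}\simeq \tilde{R}_S^{\chi}[[x_1,\dots,x_{4|\Sigma|-1}]].
\]
Since $\tilde{R}_S^{\chi}$ is a complete noetherian local ring, the standard identity $dim_{Krull}R[[x_1,\dots,x_m]]=dim_{Krull}R+m$ gives
\[
dim_{Krull}\tilde{R}_{\Sigma,S}^{\chi,\Box}=dim_{Krull}\tilde{R}_S^{\chi}+(4|\Sigma|-1).
\]

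Next I would feed in the bound $dim_{Krull}\tilde{R}_{\Sigma,S}^{\chi,\Box}\ge 4|\Sigma|-\delta$ listed among the properties of the geometric functor (itself a consequence of the presentation $\tilde{R}_{\Sigma,S}^{\chi,\Box}\simeq\tilde{R}_{loc}[[x_1,\dots,x_r]]/(f_1,\dots,f_t)$, the estimate $dim_{Krull}\tilde{R}_{loc}\ge 3|\Sigma|+1$, and the identity $r-t=|\Sigma|-1-\delta$ of Theorem 6.4). Combining with the previous equality yields
\[
dim_{Krull}\tilde{R}_S^{\chi}+(4|\Sigma|-1)\ge 4|\Sigma|-\delta,
\]
hence $dim_{Krull}\tilde{R}_S^{\chi}\ge 1-\delta$; setting $\delta=0$ gives the asserted bound $dim_{Krull}\tilde{R}_S^{\chi}\ge 1$.

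The argument is almost entirely formal, so the one step I would treat with care is the first: one must check that the smoothness of $\tilde{F}_{\Sigma,S}^{\chi,\Box}\to\tilde{F}_S^{\chi}$ produces a genuine power series presentation with exactly $4|\Sigma|-1$ variables, and not merely a smooth surjection. Here I would verify that the number of framing parameters is the same $4|\Sigma|-1$ as in Proposition 6.2 --- it counts the $N^2$ framing directions at each of the $|\Sigma|$ places modulo the single global automorphism coming from the trivial centralizer, and this count is unchanged by passing to the local conditions $\tilde{F}_v^{\chi,\Box}$ --- and that representability of $\tilde{F}_S^{\chi}$, part of the geometric hypotheses, is what makes $\tilde{R}_S^{\chi}$ available as the base of the power series ring. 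Everything after that is additivity of Krull dimension and a single subtraction.
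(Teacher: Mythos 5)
Your proposal is correct and is precisely the argument the paper intends: the paper offers no written-out proof beyond the remark that the map $\tilde{F}_{\Sigma,S}^{\chi,\Box}\rightarrow\tilde{F}_S^{\chi}$ is smooth, and your computation (power series presentation in $4|\Sigma|-1$ framing variables as in Proposition 6.2, combined with the bound $dim_{Krull}\tilde{R}_{\Sigma,S}^{\chi,\Box}\ge 4|\Sigma|-\delta$ from the geometric-functor properties, giving $dim_{Krull}\tilde{R}_S^{\chi}\ge 1-\delta$) is exactly the fleshed-out version of that one-line sketch.
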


\vskip 1cm

\section{The main result}
\vskip 0.5cm

Now we have all the necessary instruments to generalize the results of the previous chapter. Let $\bar{\rho}_1,\dots,\bar{\rho}_n$ be representations of $G_{\mathbb{Q}}$ each with values in $GL_2(k)$, where $k$ is a finite field of characteristic $p$. Let $\underline{Gr}$ be the category of finite flat group scheme over $\mathbb{Z}_p$ of order a power of $p$ and let $\underline{D}$ be a subcategory of $\underline{Gr}$ closed by products, subobjects and quotients. We assume that each $V_{\bar{\rho}_i}$ is the generic fiber of an element of $\underline{D}$ . We write
\begin{eqnarray}
\bar{\rho}=\bar{\rho}_1\oplus{\dots}\oplus{\bar{\rho}_n}:G_{\mathbb{Q}}\rightarrow{GL_{2n}(k)}.
\end{eqnarray}
It may happen that some of the $\bar{\rho}_i$ are isomorphic. Therefore we suppose that there are exactly $r$ different representations among the $\bar{\rho}_i$ which are non-isomorphic and we assume them to be $\bar{\rho}_1,\dots,\bar{\rho_r}$. Then we rewrite $\bar{\rho}$ as
\begin{eqnarray}
\bar{\rho}=\mathop{\oplus}_{i=1}^r{\bar{\rho}_i^{e_i}}.
\end{eqnarray}
\vskip 0.2cm

We want to define a deformation functor in this case. We start considering the single representation $\bar{\rho}_i$. We define the deformation functor $F_{\bar{\rho}_i,\underline{D}}:\underline{\hat{Ar}}\rightarrow{\underline{Sets}}$ which sends an artinian ring $A$ to the set of deformation classes $\rho_i$ of $\bar{\rho_i}$ to $A$ such that
\begin{itemize}
	\item $\rho_i$ is $p$-flat over $\mathbb{Z}[1/\ell]$;
	\item $\rho_i$ satisfies $(\rho_i(g)-Id)^2=0$ for every $g\in{I_{\ell}}$;
	\item $\rho_i$ is odd;
\end{itemize}
and let $F_{\bar{\rho},\underline{D}}:\underline{Ar}\rightarrow{\underline{Sets}}$ be the deformation functor associated to $\bar{\rho}$ with the same local conditions.

\vskip 0.2cm


\begin{lemma} $F_{\bar{\rho}_i,\underline{D}}$ is a geometric deformation functor.
\end{lemma}


\begin{proof}
We need to show that our local conditions satisfy the definition of geometric functor defined in section 7. At the prime $p$ we apply theorem 3.6 which tells us that the local ring is isomorphic to $\mathbb{Z}_p[[X]]$; in particoular, after inverting $p$ its framed counterpart has Krull dimension 4, as in the geometric conditions. At the infinite prime, the computations of section 5 tells us that the dimension over $\mathbb{Z}_p$ of the framed deformation ring is 2. Finally at the prime $\ell$ the condition that $(\rho_i(\sigma)-id)^2=0$ is equivalent to a Steinberg type condition with $\lambda$ equal to the trivial character. Therefore theorem 4.5 gives us that the framed deformation ring has Krull dimension 4; in particular, inverting $p$ it is regular of dimension 3. It follows that all the conditions of being a geometric deformation functor are satisfied. Then we can apply theorem 7.1 and obtain that each $F_{\bar{\rho}_i}$ has a representing ring of Krull dimension at least 1.
\end{proof}

\vskip 0.2cm

\begin{teo}[Main theorem: dimension 2 case] Suppose that:
\begin{enumerate}
	\item $Ext_{\underline{D},p}^1(V_{\bar{\rho}_i},V_{\bar{\rho}_j})$ of killed-by-$p$ extensions is trivial for every $i,j=1,\dots,r$;
	\item $Hom_G(V_{\bar{\rho}_i},V_{\bar{\rho}_j})=\begin{cases} k & \text{if}\  i=j \\ 0 & \text{if}\  i\ne{j} \end{cases}.$  
\end{enumerate}

Then the functor $F_{\bar{\rho},\underline{D}}^{\Box}$ is represented by a power series ring over $W(k)$ in $N$ variables, where
\begin{eqnarray}
N=4n^2-\sum_{i=1}^r{e_i^2}.
\end{eqnarray}
\end{teo}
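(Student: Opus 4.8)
The plan is to combine the local computations from the previous chapters with the local-to-global presentation (the Key Lemma \ref{keylemma} and Theorem 7.1) and then execute a final explicit matrix-algebra computation. First I would set $\Sigma=S=\{p,\ell,\infty\}$ and observe that by Lemma 8.1 each $F_{\bar{\rho}_i,\underline{D}}$ is geometric, so the framed local rings have the stated generic-fibre dimensions. The global representation is $\bar{\rho}=\oplus_{i=1}^r\bar{\rho}_i^{e_i}$, and the two hypotheses are designed to control its cohomology: condition (2) computes $H^0$ while condition (1), the vanishing of killed-by-$p$ extensions in the deformation category $\underline{D}$, is exactly what forces the relevant $H^1$ to decompose as a sum of the diagonal blocks. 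Concretely, $Ad(\bar{\rho})=\bigoplus_{i,j}Hom(V_{\bar{\rho}_i},V_{\bar{\rho}_j})^{e_ie_j}$, and under the $\underline{D}$-conditions the constrained tangent space $F_{\bar{\rho},\underline{D}}(k[\epsilon])$ sees no contribution from the off-diagonal $i\ne j$ blocks (by (1)) and only a $1$-dimensional contribution from each diagonal $Hom$ (by (2)).

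Next I would compute the relevant dimensions. By Proposition 2.3(3), the framed tangent space has dimension $\dim_k F_{\bar{\rho},\underline{D}}(k[\epsilon])+N^2-\dim_k H^0(G_S,Ad(\bar{\rho}))$ with $N=2n$. Hypothesis (2) gives $\dim_k H^0=\dim_k End_G(V_{\bar{\rho}})=\sum_{i=1}^r e_i^2$. The plan is to show the constrained deformation functor $F_{\bar{\rho},\underline{D}}$ is formally smooth, so that its framed version is represented by a power series ring whose number of variables equals the framed tangent dimension. The target count $N=4n^2-\sum_i e_i^2$ strongly suggests that $\dim_k F_{\bar{\rho},\underline{D}}(k[\epsilon])$ equals $\dim_k H^0(G_S,Ad(\bar{\rho}))=\sum_i e_i^2$ as well, i.e. the unframed constrained tangent space and $H^0$ cancel down to exactly the diagonal pieces. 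This is precisely where Theorem 7.2 ($r-t+\delta=|\Sigma|-1$) and $\delta=0$ (since the image is reducible but we are in the odd, finite-prime-free situation where the argument after Theorem 6.4 applies) enter: they pin down $t$ against $r$ so that no genuine relations survive and the presentation in the Key Lemma becomes unobstructed.

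The key steps in order are therefore: (i) verify the local framed rings are smooth power series rings over $W(k)$ of the asserted dimensions, using Theorem 3.6 at $p$, Theorem \ref{Steinberg} at $\ell$ (with $\lambda$ trivial, as in Lemma 8.1), and Section \ref{infinite} at $\infty$; (ii) assemble $\tilde R_{loc}=\hat\otimes_{v}\tilde R_v^{\chi,\Box}$, a power series ring; (iii) invoke the Key Lemma to write $\tilde R_{\Sigma,S}^{\chi,\Box}\simeq \tilde R_{loc}[[x_1,\dots,x_r]]/(f_1,\dots,f_t)$; (iv) use Theorem 7.2 together with the cohomological vanishing from hypotheses (1)--(2) to show $t=0$, so the ring is a genuine power series ring; (v) descend from the auxiliary $\Sigma$-framed ring to the single-basis framed ring $R_{\bar{\rho},\underline{D}}^{\Box}$ via the smooth morphism of Proposition 6.3 and recount variables, yielding $N=4n^2-\sum_i e_i^2$; (vi) perform the final direct matrix-algebra verification that the block structure indeed produces exactly this count, matching the off-diagonal $Hom$-vanishing to the off-diagonal variable contributions $4n^2-\sum_i e_i^2=\sum_{i\ne j}4e_ie_j+\sum_i(4e_i^2-e_i^2)$.

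The hard part will be step (iv)--(vi): showing that the extension-vanishing hypothesis (1) genuinely kills all obstructions $t$ rather than merely the tangent-space off-diagonal terms. The subtlety is that $Ext^1_{\underline{D},p}$ controls only killed-by-$p$ extensions inside the deformation category, whereas the relations $f_i$ in the Key Lemma come from $H^2(G_S,Ad^0(\bar{\rho}))$ and the cokernel of $\theta_1^{\Box}$; I would need to argue that the $\underline{D}$-constraints cut these down compatibly with the block decomposition, so that the local smoothness assembles into global smoothness with no surviving relations. The final matrix computation then confirms the variable count block by block: the diagonal blocks contribute the framed-minus-centraliser dimensions already computed in the single-representation case, and the off-diagonal blocks contribute freely (since by (1) they carry no deformation constraints beyond the framing), giving exactly $N=4n^2-\sum_{i=1}^r e_i^2$.
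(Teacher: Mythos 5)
There are two genuine gaps here. First, your route runs the Chapter 6--7 local-to-global machinery on the global representation $\bar{\rho}=\oplus_{i=1}^r\bar{\rho}_i^{e_i}$ itself, but none of those results apply to it: Theorem 6.1 and Proposition 6.2 presuppose that the unframed functor $F_{\bar{\rho}}^{\chi}$ is representable (trivial centralizer), and the geometric-deformation setup of Chapter 7 explicitly assumes $\bar{\rho}$ odd, $2$-dimensional and absolutely irreducible. Here $\bar{\rho}$ is $2n$-dimensional and reducible, with centralizer $\oplus_{i=1}^r M_{e_i}(k)$ of dimension $\sum_i e_i^2>1$, so the ring $R_S^{\chi}$ appearing in Proposition 6.2 does not exist and the quantities $r,t,\delta$ of Theorem 6.4 are not defined in this setting. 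Moreover your step (iv) --- deducing $t=0$ from the formula $r-t+\delta=|\Sigma|-1$ --- would fail even in the irreducible case: that theorem only pins down the combination $r-t+\delta$, and $t$ is merely an upper bound on the number of relations, so it can never by itself show the presentation is unobstructed; you acknowledge this is the hard part but supply no argument. The paper avoids the issue entirely: it applies the geometric machinery only to each $2$-dimensional $\bar{\rho}_i$ (Lemma 8.1 and Theorem 7.1 give Krull dimension $\ge 1$ for $R_{\bar{\rho}_i}$, and hypothesis (1) gives trivial tangent space, forcing $R_{\bar{\rho}_i}\simeq{W(k)}$ and hence a $W(k)$-lift $\rho_i$), and then proves the global statement by a direct construction: conjugating the block-diagonal lift $T$ by $1+M(\underline{x})$, using the centralizer (hypothesis (2)) to kill exactly $\sum_i e_i^2$ variables, and showing the resulting power series ring $\tilde{R}$ in $N$ variables is universal via a tangent-space dimension count together with an injectivity argument on mod-$p$ tangent spaces. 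No global obstruction or $H^2$ vanishing is ever invoked, so the step you could not complete is not needed on the paper's route.

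Second, your tangent-space count is internally inconsistent. You claim $\dim_k F_{\bar{\rho},\underline{D}}(k[\epsilon])=\sum_i e_i^2$, with the diagonal $Hom$'s contributing; but the tangent space of the constrained functor is the extension group $Ext^1_{\underline{D},p}(V_{\bar{\rho}},V_{\bar{\rho}})=\oplus_{i,j}Ext^1_{\underline{D},p}(V_{\bar{\rho}_i},V_{\bar{\rho}_j})^{e_ie_j}$, and hypothesis (1) kills \emph{all} of these blocks, the diagonal ones included, so the unframed constrained tangent space is zero. The groups $Hom_G(V_{\bar{\rho}_i},V_{\bar{\rho}_j})$ of hypothesis (2) do not contribute to $H^1$ at all; they compute $H^0(G_S,Ad(\bar{\rho}))=Ad(\bar{\rho})^G\simeq{\oplus_i M_{e_i}(k)}$ and enter only through the framing correction term in Proposition 2.2(3). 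Plugging your numbers into that formula would give a framed tangent space of dimension $\sum_i e_i^2+4n^2-\sum_i e_i^2=4n^2$, contradicting the target; the correct count is $0+4n^2-\sum_i e_i^2=N$, which is exactly how the paper matches the dimension of $\tilde{R}$ against that of the universal framed ring.
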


\begin{proof} The representation $\bar{\rho}$ has the following matrix form
\begin{eqnarray}
\begin{pmatrix}
\begin{pmatrix} 
\bar{\rho}_1 & & \\
 & \ddots &  \\
 &  & \bar{\rho}_1
 \end{pmatrix} & & \\
  & \ddots & \\
  & & \begin{pmatrix} 
\bar{\rho}_r & & \\
 & \ddots &  \\
 &  & \bar{\rho}_r
 \end{pmatrix}
 \end{pmatrix}.
\end{eqnarray}
We call $\bar{T}$ this matrix and $\bar{\beta}$ a $k$-basis of $V_{\bar{\rho}}$ in which $\bar{\rho}$ has this matrix form; $\bar{T}$ belongs to $M_h(k)$ where we denote by $h=2n$. We also denote by $h_j=\sum_{i=1}^{j-1}2e_i$.

By the lemma, we know that each $\bar{\rho}_i$ is geometric, therefore the funtor $F_{\bar{\rho}_i}$ is represented by a ring of Krull dimension $\ge{1}$; on the other hand the hypothesis of triviality of extension set tells us that the tangent space of $F_{\bar{\rho_i}}$ is trivial, therefore the universal deformation ring is a quotient of $W(k)$. Therefore the universal ring must be isomorphic to $W(k)$ and there exist a $p$-adic lift of $\bar{\rho}_i$, given by the universal representation, that we call $\rho_i$.

Let then $T$ be the matrix obtained by $\bar{T}$ replacing all the $\bar{\rho}_i$ with the respective $\rho_i$, $V_{\rho}$ the associated representation module over $W(k)$ and $\beta$ a basis of $V_{\rho}$ lifting $\bar{\beta}$ in which $T$ has the block-diagonal shape. We look for a framed deformation of $\bar{T}$ of the form
\begin{eqnarray}
\tilde{T}=(1+M(\underline{x}))T(1+M(\underline{x}))^{-1}
\end{eqnarray}
where $M=M(\underline{x})$ is the matrix having a variable $x_{i,j}$ as $(i,j)$-th entry and $\underline{x}$ is the array of all such $x$. We write $M$ as
\begin{eqnarray}
\begin{pmatrix}
M_{1,1} & M_{1,2} & \dots & M_{1,r} \\
M_{2,1} & M_{2,2} & & \vdots \\
\vdots & & \ddots & \\
M_{r,1} & M_{r,2} & \dots & M_{r,r}
\end{pmatrix}.
\end{eqnarray}
where $M_{i,j}$ is the $2e_i\times{2e_j}$ submatrix given by
\begin{eqnarray}
M_{i,j}=\begin{pmatrix} x_{h_i+1,h_j+1} & \dots & x_{h_i+1,h_{j+1}} \\
\vdots & \ddots & \vdots \\
x_{h_{i+1},h_j+1} & \dots & x_{h_{i+1},h_{j+1}} \end{pmatrix}.
\end{eqnarray}
Then we have that $(\tilde{T},\beta(1+M))$ gives a framed deformation of $\bar{\rho}$ to the ring $R=W(k)[[x_{1,1},\dots,x_{2n,2n}]]$. 

We want to modify our deformation by a linear transformations lying in the centralizer of $\rho$ to kill some of the variables. Consider the diagonal submatrices $M_{i,i}$; we can eventually subdivide it in $2\times{2}$ submatrices
\begin{eqnarray}
\begin{pmatrix} M_{i,i}^{(1,1)} & \dots & M_{i,i}^{(1,e_i)} \\
\vdots & \ddots & \vdots \\
M_{i,i}^{(e_i,1)} & \dots & M_{i,i}^{(e_i,e_i)}\end{pmatrix},
\end{eqnarray}
where
\begin{eqnarray}
M_{i,i}^{(s,t)}=\begin{pmatrix} x_{h_i+2s-1,h_i+2t-1} & x_{h_i+2s-1,h_i+2t} \\
x_{h_i+2s,h_i+2t-1} & x_{h_i+2s,h_i+2t} \end{pmatrix}.
\end{eqnarray}

We look for a matrix $Y\in{M_h(R)}$ such that $1+Y$ commutes with $T$ and the conjugation by $1+Y$ does not modify the framed deformation class. The hypothesis 2 on the mutual endomorphisms of the $\rho_i$ implies that $Y$ must be a block diagonal matrix of the form
\begin{eqnarray}
Y=diag[Y_1,\dots,Y_r]
\end{eqnarray}
where $Y_i=A_i\otimes{id_2}\in{M_{2e_i}(R)}$ and $A_i\in{M_{e_i}(R)}$.

Now we need to choose properly the entries $\{a_{ist}\}_{s,t=1,\dots,e_i}$ of the matrices $Y_i$. Let
\begin{eqnarray}
(1+M)(1+Y)T(1+Y)^{-1}(1+M)^{-1}=(1+\tilde{M})T(1+\tilde{M})^{-1}.
\end{eqnarray}
We set
\begin{eqnarray}
a_{ist}=\frac{-x_{h_i+2s,h_i+2t}}{1+x_{h_i+2s,h_i+2t}}.
\end{eqnarray}
The resulting matrix $\tilde{M}$ has entries $\tilde{x}_{u,v}$ given by
\begin{eqnarray}
\tilde{x}_{u,v}=\begin{cases} 0 & \text{if} \  u=h_i+2s,\ v=h_i+2t \\ \frac{x_{u,v}}{1+x_{h_i+2s,h_i+2t}} & \text{otherwise}  \end{cases}
\end{eqnarray} 

                               
To make the notation easier, we rename $\tilde{M}=M$ and $\tilde{x}_{i,j}=x_{i,j}$. We call $(\tilde{\rho},\tilde{\beta}(1+Y))$ the resulting framed deformation obtained at the end of this process.

The framed deformation $\tilde{\rho}$ has values in the ring
\begin{eqnarray}
\tilde{R}=W(k)[[x_{1,1},\dots,x_{2n,2n}]]/(x_{h_i+2s,h_i+2t}: s,t=1,\dots,e_i,\ i=1,\dots,r)
\end{eqnarray}

We need to show that this is effectively the universal framed deformation. Observe that $\tilde{R}$ is a power series ring over $W(k)$ in exactly $N$ variables. First we need to compute the dimension of the framed tangent space. We use the fact that the tangent space $F_{\bar{\rho},S}^{\Box}(k[\epsilon])$ fits the exact sequence
\begin{eqnarray}
0\rightarrow{F_{\bar{\rho},S}(k[\epsilon])}\rightarrow{F_{\bar{\rho},S}^{\Box}(k[\epsilon])}\rightarrow{Ad(\bar{\rho})/Ad(\bar{\rho})^G}\rightarrow{0}
\end{eqnarray}
and that the unframed tangent space is trivial, because of the triviality of the extension set. Note that
\begin{multline}
Ad(\bar{\rho})^G=End_G(\bar{\rho}_1^{e_1}\oplus\dots\oplus{\bar{\rho}_r^{e_r}})=\mathop{\oplus}_{i=1}^r{End_G(\bar{\rho}_i^{e_i})}=\mathop{\oplus}_{i=1}^r{M_{e_i}(k)}
\end{multline}
where we have used the hypothesis on the sets $Hom_G(V_{\bar{\rho}_i},V_{\bar{\rho}_j})$. 

Therefore we have
\begin{eqnarray}
dim(F_{\bar{\rho},S}^{\Box}(k[\epsilon]))=dim(Ad(\bar{\rho}))-dim(Ad(\bar{\rho})^G)=4n^2-\sum_{i=1}^r{e_i}^2=N,
\end{eqnarray}
then the universal framed deformation ring $R_{\bar{\rho},S}^{\Box}$ and $\tilde{R}$ have the same relative Krull dimension.

Now we use the universality of $R_{\bar{\rho},S}^{\Box}$ that gives us a unique $W(k)$-algebra morphism $\pi:R_{\bar{\rho},S}^{\Box}\rightarrow{\tilde{R}}$ such that $\hat{\pi}\circ{\rho_{univ}}=\tilde{\rho}$ where $\rho_{univ}$ is the universal representation and $\hat{\pi}$ is the extension of $\pi$ to $GL_2$. We have a diagram
\[ \xymatrix {
W(k)[[x_1,\dots,x_N]] \ar[rd]^{\pi_2} \ar[r]^{\pi_1} & R_{\bar{\rho},S}^{\Box} \ar[d]^{\pi} & \\
 &  \tilde{R} & \simeq{W(k)[[y_1,\dots,y_N]]}. } \]
If the map $\pi$ is surjective, since $\pi_1$ is surjective, too, it follows that $\pi_2$ is surjective, too. But $\pi_2$ is $W(k)$-algebra map between algebras of the same dimension and therefore it must be an isomorphism. But then $\pi$ must be an isomorphism, too. The theorem is therefore proved, provided that $\pi$ is surjective.

To prove that the map $\pi$ is surjective, it is enough to show that the induced map on mod $p$ tangent space
\begin{eqnarray}
\tilde{\pi}:Hom(\tilde{R}/p,k[\epsilon])\rightarrow{Hom(R_{\bar{\rho},S}^{\Box}/p,k[\epsilon])}
\end{eqnarray}
is injective (because the functor $Hom(.,k[\epsilon])$ is contravariant). Since $\tilde{R}$ is a power series ring over $W(k)$ in $N$ variables, an element of $Hom(\tilde{R}/p,k[\epsilon])$ is given by a map which sends the variables $x_1,\dots,x_N$ to elements $\epsilon\alpha_1,\dots,\epsilon\alpha_N$ with $\alpha_1,\dots,\alpha_N$ giving a basis for the $k[\epsilon]$-module $V$ given by a representation lifting $V_{\bar{\rho}}$; different elements are given by different choices of the basis.
Suppose then that two elements $(V,\{\alpha_i\}),(V,\{\alpha_j\})$ have the same image with respect to $\tilde{\pi}$; it means that there exists a matrix $A\in{GL_n(k[\epsilon])}$ whose conjugation maps the basis $\{\alpha_i\}$ into $\{\alpha_j\}$ and $A$ commutes with the representation, that is, lies in the centralizer of the image of $\bar{\rho}$. But, beacuse of the construction of the representation $\bar{\rho}$ such matrix must be the identity. Therefore the map is injective and the theorem is proved.

\end{proof}
\vskip 0.5cm

REMARK: The existence of a $p$-adic lift to $W(k)$ for each $\bar{\rho}_i$ can be proved even if do not have the triviality of the extension set, in the case $p$ odd, if we had the condition that 
\begin{eqnarray}
\bar{\rho}_i|_{G_{\mathbb{Q}(\sqrt{\pm{p}})}}.
\end{eqnarray}
This condition is automathically implied by the absolute irreducibility of $\bar{\rho}_i$ if $p>3$ (see \cite[Ch.4.2]{K1} for a proof). The case $p=3$ si treated in \cite[Lemma 3.1 and 3.2]{D}.

\vskip 0.5cm
As an application of the theorem, we recover an example of Schoof in \cite{S2}. Consider an abelian variety $A$ over $\mathbb{Q}$ which has good reduction in all but one prime $\ell$, where it has semistable reduction. By \cite[Th. 1.2]{S2}, if $\ell=11$ then $A$ is isogenous to a product of copies of $E=J_0(11)$. Moreover $A$ is supersingular at $2$ and $A[2]\simeq{E[2]^g}$. Therefore we can look at the natural $G_{\mathbb{Q}}$-representation $\bar{\rho}_{A,2}$ on the $2$-torsion points of $A$ as product of $g$ copies of the reperesentation $\bar{\rho}_{E,2}$. In formulas
\begin{eqnarray}
\bar{\rho}_{A,2}=\mathop{\oplus}_{i=1}^g{\bar{\rho}_{E,2}}.
\end{eqnarray}
Then we can study the deformations of $\bar{\rho}_{A,2}$ from the ones of $\bar{\rho}_{E,2}$. Applying the theorem we have that the functor $F_{\bar{\rho}_{A,2},\underline{D}}$ is represented by a power series ring over $\mathbb{Z}_p$ in $3g^2$ variables. Moreover, if we go trough the same construction as in the theorem, we have that the universal framed deformation is given taking the product of $g$ copies of the $\mathbb{Z}_p$-representation given by the Tate module $T_p{E}$ and then applying the transformation with the matrix $M$.

\end{document}